\pdfoutput=1
\RequirePackage{ifpdf}
\ifpdf % We are running pdfTeX in pdf mode
\documentclass[pdftex]{sigma}
\else
\documentclass{sigma}
\fi

\numberwithin{equation}{section}

\newtheorem{Theorem}{Theorem}[section]
\newtheorem{Lemma}[Theorem]{Lemma}
\newtheorem{Proposition}[Theorem]{Proposition}
 { \theoremstyle{definition}
\newtheorem{Remark}[Theorem]{Remark} }

\def\D{\mathrm{D}}

\def\T{\mathrm{T}}
\def\d{\mathrm{d}}
\def\h{\mathrm{h}}

\def\s{\mathrm{s}}
\def\u{\mathrm{u}}
\def\r{\mathrm{r}}

\def\Cset{\mathbb{C}}

\def\Nset{\mathbb{N}}

\def\Qset{\mathbb{Q}}
\def\Rset{\mathbb{R}}

\def\Zset{\mathbb{Z}}

\def\id{\mathrm{id}}

\DeclareMathOperator{\tr}{tr}
\DeclareMathOperator{\sech}{sech}

%% Next two commands enlarge space for figures on the top of a page

\begin{document}
\allowdisplaybreaks

\newcommand{\arXivNumber}{1906.?????}

\renewcommand{\thefootnote}{}

\renewcommand{\PaperNumber}{049}

\FirstPageHeading

\ShortArticleName{Heteroclinic Orbits and Nonintegrability in Hamiltonian Systems}

\ArticleName{Heteroclinic Orbits and Nonintegrability\\ in Two-Degree-of-Freedom Hamiltonian Systems\\ with Saddle-Centers\footnote{This paper is a~contribution to the Special Issue on Algebraic Methods in Dynamical Systems. The full collection is available at \href{https://www.emis.de/journals/SIGMA/AMDS2018.html}{https://www.emis.de/journals/SIGMA/AMDS2018.html}}}

\Author{Kazuyuki YAGASAKI and Shogo YAMANAKA}
\AuthorNameForHeading{K.~Yagasaki and S.~Yamanaka}
\Address{Department of Applied Mathematics and Physics, Graduate School of Informatics,\\ Kyoto University, Yoshida-Honmachi, Sakyo-ku, Kyoto 606-8501, Japan}
\Email{\href{mailto:yagasaki@amp.i.kyoto-u.ac.jp}{yagasaki@amp.i.kyoto-u.ac.jp}, \href{mailto:s.yamanaka@amp.i.kyoto-u.ac.jp}{s.yamanaka@amp.i.kyoto-u.ac.jp}}

\ArticleDates{Received January 29, 2019, in final form June 21, 2019; Published online July 02, 2019}

\Abstract{We consider a class of two-degree-of-freedom Hamiltonian systems with saddle-centers connected by heteroclinic orbits and discuss some relationships between the existence of transverse heteroclinic orbits and nonintegrability. By the Lyapunov center theorem there is a family of periodic orbits near each of the saddle-centers, and the Hessian matrices of the Hamiltonian at the two saddle-centers are assumed to have the same number of positive eigenvalues. We show that if the associated Jacobian matrices have the same pair of purely imaginary eigenvalues, then the stable and unstable manifolds of the periodic orbits intersect transversely on the same Hamiltonian energy surface when sufficient conditions obtained in previous work for real-meromorphic nonintegrability of the Hamiltonian systems hold; if not, then these manifolds intersect transversely on the same energy surface, have quadratic tangencies or do not intersect whether the sufficient conditions hold or not. Our theory is illustrated for a system with quartic single-well potential and some numerical results are given to support the theoretical results.}

\Keywords{nonintegrability; Hamiltonian system; heteroclinic orbits; saddle-center; Melnikov method; Morales--Ramis theory; differential Galois theory; monodromy}

\Classification{37J30; 34C28; 37C29}

\renewcommand{\thefootnote}{\arabic{footnote}}
\setcounter{footnote}{0}

\section{Introduction}\label{section1}
Chaotic dynamics and nonintegrability of Hamiltonian systems are classical and fundamental topics in dynamical systems, as seen in the famous work of Poincar\'e \cite{P92}, and they have attracted much attention \cite{K96,MO17,M99,M73,S99}. A~Hamiltonian system is nonintegrable if it exhibits chaotic dynamics (see, e.g.,~\cite{M73}), but the converse is not always true: it may not exhibit chaotic dynamics even if it is nonintegrable. Chaotic dynamics is also very often closely related to the existence of transverse homo- and heteroclnic orbits. For example, if there exist transverse homoclinic orbits to periodic orbits, then a Poincar\'e map appropriately defined is topologically conjugated to a~horseshoe map, which has an invariant set consisting of orbits characterized by the Bernoulli shift, i.e., chaotic dynamics occurs \cite{GH83,M73,W03}. Morales-Ruiz and Peris~\cite{MP99} and Yagasaki~\cite{Y03} discussed a relationship between nonintegrability and chaos for a class of two-degree-of-freedom Hamiltonian systems with saddle centers having homoclinic orbits. They showed that if a~sufficient condition for nonintegrability holds, then there exist transverse homoclnic orbits to periodic orbits. Here we extend their results to a similar class of Hamiltonian systems with saddle centers connected by heteroclinic orbits.

\begin{figure}[t]\centering
\includegraphics[scale=0.8]{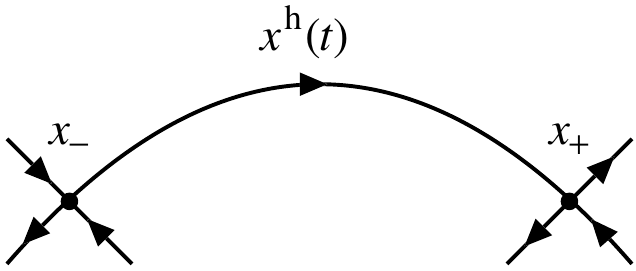}
\caption{Assumptions~(A2) and (A3).}\label{fig:1a}
\end{figure}

More concretely, we consider two-degree-of-freedom Hamiltonian systems of the form
\begin{gather}
\dot{x}=J\D_{x}H(x,y),\qquad
\dot{y}=J\D_{y}H(x,y),\qquad
(x,y)\in\Rset^{2}\times\Rset^{2},\label{eqn:sys}
\end{gather}
where $H \colon \Rset^{2}\times\Rset^{2}\rightarrow\Rset$ is analytic and $J$ represents the $2\times 2$ symplectic matrix,
\[
J=
\begin{pmatrix}
0 & 1\\\
-1 & 0
\end{pmatrix}.
\]
We make the following assumptions.
\begin{itemize}\itemsep=0pt
\item[(A1)] The $x$-plane, $\big\{(x,y)\in\Rset^2\times\Rset^2\,|\,y=0\big\}$, is invariant under the flow of \eqref{eqn:sys}, i.e., $\D_yH(x,0)\allowbreak =0$ for any $x\in\Rset^{2}$.
\item[(A2)] There exist two saddle-centers at $(x,y)=(x_\pm,0)$ on the $x$-plane such that the matrix $J\D^2_xH(x_\pm,0)$ has a pair of real eigenvalues $\lambda_\pm$, $-\lambda_\pm$ and the matrix $J\D_y^2H(x_\pm,0)$ has a~pair of purely imaginary eigenvalues ${\rm i}\omega_\pm$, $-{\rm i}\omega_\pm$ ($\lambda_\pm,\omega_\pm>0$), where the upper and lower signs in the subscripts are taken simultaneously.
\end{itemize}
Assumption (A2) implies that there exist one-parameter families of periodic orbits near the saddle-centers $(x_\pm,0)$ by the Lyapunov center theorem (see, e.g., \cite{MO17}). In addition, the system restricted on the $x$-plane,
\begin{gather}
\dot{x}=J\D_{x}H(x,0),\label{eqn:sysx}
\end{gather}
has saddles at $x=x_\pm$. The reader may think that assumption~(A1) is too restrictive but quite a~few important Hamiltonian systems satisfy this assumption. See, e.g., \cite{SY08,Y02} for such examples.

\begin{itemize}\itemsep=0pt
\item[(A3)] The two saddles $x=x_\pm$ are connected by a heteroclinic orbit $x^\h(t)$ in~\eqref{eqn:sysx}, as shown in Fig.~\ref{fig:1a}.
\end{itemize}
In (A3), if $x_-=x_+$, then $x^\h(t)$ becomes a homoclinic orbit.

In \cite{SY08} a Melnikov-type technique (see, e.g., \cite{GH83,M63} for its original version) was developed for~\eqref{eqn:sys} to detect the existence of transverse heteroclinic orbits connecting periodic orbits near the saddle-centers $(x,y)=(x_\pm,0)$, when $H(x,y)$ is only $C^{r+1}$ ($r\ge 2$). The Melnikov function was defined in terms of a fundamental matrix to the normal variational equation (NVE) along the heteroclinic orbit $(x, y)= \big(x^\h(t),0\big)$,
\begin{gather}
\dot{\eta}=J\D_y^2H\big(x^\h(t),0\big)\eta,\qquad \eta\in\Rset^2,\label{eqn:nveh}
\end{gather}
and such transverse heteroclinic orbits were detected if it has a simple zero. See Section~\ref{section2.1} for more details. This is an extension of a technique developed in~\cite{Y00}, which enables us to show that there exist transverse homoclinic orbits to such periodic orbits and chaotic dynamics occurs~\cite{GH83,W03}, when $x_-=x_+$ and $x^\h(t)$ becomes a homoclinic orbit. Moreover, if there exist transverse heteroclinic orbits from periodic orbits near $(x_-,0)$ to those near $(x_+,0)$ and vice versa, i.e., transverse heteroclinic cycles between the periodic orbits, then so do transverse homoclinic orbits to those near $(x_+,0)$ and $(x_-,0)$, so that the Hamiltonian system~\eqref{eqn:sys} exhibits chaotic dynamics and is nonintegrable. We also point out that Grotta Ragazzo \cite{G94} obtained a~concrete sufficient condition for the occurrence of chaotic dynamics in a special class of~\eqref{eqn:sys} with $x_-=x_+$, using a general result of~\cite{L91}, a little earlier.

On the other hand, Morales-Ruiz and Ramis \cite{MR01} presented a sufficient condition for meromorphic nonintegrability of general complex Hamiltonian systems. Their theory, which is now called the Morales-Ramis theory, states that complex Hamiltonian systems are meromorphically nonintegrable if the identity components of the differential Galois groups \cite{CH11, VS03} for their variational equations (VEs) or NVEs around particular nonconstant solutions such as periodic, homoclinic and heteroclinic orbits are not commutative. See also~\cite{M99}. Ayoul and Zung~\cite{AZ10} used a simple trick called the \emph{cotangent lifting} to show that the Morales-Ramis theory is also valid for detection of meromorphic nonintegrability of non-Hamiltonian systems in the meaning of Bogoyavlenskij~\cite{B98}. Moreover, Morales-Ruiz and Peris \cite{MP99} studied a special class of~\eqref{eqn:sys} with $x_-=x_+$ and showed that
if the Hamiltonian system~\eqref{eqn:sys} is determined by the Morales-Ramis theory to be real-meromorphically nonintegrable,  then chaotic dynamics occurs, using the results of~\cite{G94}. See also~\cite{M99}. Their result was extended to~\eqref{eqn:sys} with $x _− = x_ +$ in~\cite{Y03}, based on the result of~\cite{Y00}. Recently, a further extension on sufficient conditions for real-meromorphic nonintegrability to general dynamical systems having homo- or heteroclinic orbits was accomplished in \cite{YY17}. See Section~\ref{section2.2} for more details.

In this paper, based on \cite{SY08,YY17}, we extend the results of \cite{MP99,Y03} and show the following for~\eqref{eqn:sys} under assumptions (A1)--(A3).
\begin{itemize}\itemsep=0pt
\item \looseness=-1 Assume that $\omega_+=\omega_-$. If sufficient conditions obtained in \cite{YY17} for real-meromorphic nonintegrability near the heteroclinic orbit hold, then the stable and unstable manifolds of pe\-rio\-dic orbits on the same Hamiltonian energy surface near the saddle-centers $(x_\pm,0)$ intersect transversely, i.e., there exist transverse heteroclinic orbits connecting the periodic orbits.
\item Assume that $\omega_+\neq\omega_-$. Then these manifolds intersect transversely, have quadratic tangencies or do not intersect whether the sufficient conditions hold or not. Moreover, under an additional condition, if the sufficient condition does not hold, i.e., a necessary condition for real-meromorphic integrability holds, then these manifolds do not intersect. This may be surprising for the reader since they do not coincide even if the Hamiltonian systems are integrable.
\end{itemize}
Here the associated Hessian matrices of the Hamiltonian are assumed to have the same number of positive eigenvalues: otherwise there exist no periodic orbits near $(x_\pm,0)$ on the same energy surface, as shown in Proposition \ref{prop:3a} below. Our theory is illustrated for a system with quartic single-well potential and some numerical results by using the computer software {\tt AUTO} \cite{DO12} are given to support the theoretical results.

The above results are remarkable since a relationship between the existence of transverse heteroclinic orbits and nonintegrability for Hamiltonian systems, both of which are important properties of dynamical systems, is addressed for the first time, to the authors' knowledge. If not only transverse heteroclinic orbits but also heteroclinic cycles exist, then chaotic dynamics occurs (see the last paragraph of Section~\ref{section2.1}), so that the Hamiltonian systems are nonintegrable. However, if transverse heteroclinic orbits exist but heteroclinic cycles are not formed, then chaotic dynamics may not occur and it is not clear that the systems are nonintegrable. See, e.g., an example in \cite[Section~1.1.2]{Z88}. We remark that in different settings the non-existence of first integrals when transverse heteroclinic orbits to hyperbolic periodic orbits exist was discussed in \cite{D06,Z88}. Moreover, transverse heteroclinic orbits may not exist even if the systems are nonintegrable. Thus, our problem is more subtle, so that our conclusions are more complicated as stated above, compared with the previous one discussed for homoclinic orbits in~\cite{MP99,Y03}.

The outline of this paper is as follows. In Section~\ref{section2} we briefly review the previous results of \cite{SY08} and~\cite{YY17} on the existence of transverse heteroclinic orbits to periodic orbits near $(x_\pm,0)$ and on necessary conditions for real-meromorphic integrability, i.e., sufficient conditions for real-meromorphic nonintegrability. We state the main theorems and prove them in Section~\ref{section3}, and give the example stated above along with numerical results in Section~\ref{section4}.

\section{Previous results}\label{section2}
\subsection{Melnikov-type technique}\label{section2.1}

We first review the result of \cite{SY08} for the existence of transverse heteroclinic orbits in~\eqref{eqn:sys}.

\begin{figure}\centering
\includegraphics[scale=0.85]{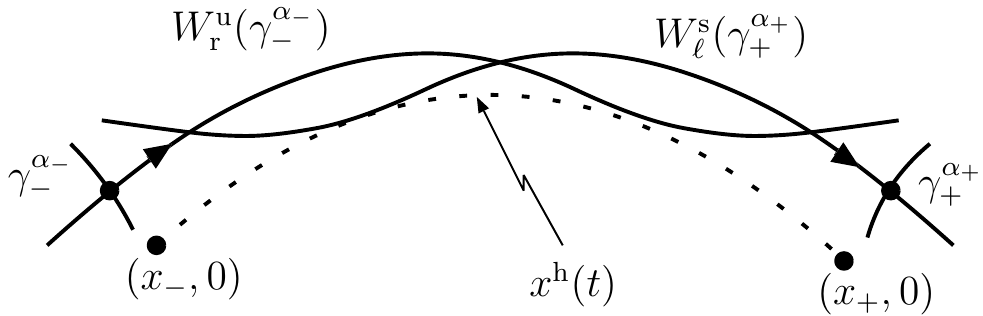}
\caption{The right branch of the unstable manifold of $\gamma_-^{\alpha_-}$ and the left branch of the stable manifold of $\gamma_+^{\alpha_+}$, denoted by $W_\r^\u \big(\gamma_-^{\alpha_-}\big)$ and $W_\ell^\s\big(\gamma_+^{\alpha_+}\big)$, on a Poincar\'e section.}\label{fig:2a}
\end{figure}

Suppose that assumptions (A1)--(A3) hold.
As stated in Section~\ref{section1}, near the saddle-centers $(x_\pm,0)$,
 there exist one-parameter families of periodic orbits,
 which are denoted by $\gamma^{\alpha_{\pm}}_{\pm}, \alpha_{\pm} \in (0, \bar{\alpha}_{\pm}]$,
 with $\bar{\alpha}_{\pm}>0$.
As $\alpha_{\pm} \to 0$,
 they approach $(x_\pm, 0)$ and their periods approach $2\pi/\omega_{\pm}$.
Let $W_\r^\u \big(\gamma_-^{\alpha_-}\big)$ (resp.~$W_\ell^\s\big(\gamma_+^{\alpha_+}\big)$)
 denote the right branch of the unstable manifold of $\gamma_-^{\alpha_-}$
 (resp.\ the left branch of the stable manifold of $\gamma_+^{\alpha_+}$)
 near the heteroclinic orbit $\big(x^{\h} (t), 0\big)$.
See Fig.~\ref{fig:2a}.

Let $\Psi(t)$ denote the fundamental matrix
 of the NVE \eqref{eqn:nveh} along $\big(x^\h(t),0\big)$.
Let $\Phi_\pm(t)$ be the fundamental matrices
 of the NVEs around the saddle-centers $(x_\pm,0)$,
\begin{gather}
\dot{\eta}=J\D_y^2H(x_\pm,0)\eta \label{eqn:nvesc},
\end{gather}
with $\Phi_\pm(0)=\id_2$, where $\id_2$ represents the $2\times 2$ identity matrix.
We easily show that the limits
\begin{gather}
B_-=\lim_{t\rightarrow -\infty}\Phi_-(-t)\Psi(t),\qquad B_+=\lim_{t\rightarrow +\infty}\Phi_+(-t)\Psi(t)
\label{eqn:B}
\end{gather}
exist (cf. \cite[Lemma~3.1]{Y00}) and set $B_0=B_+B_-^{-1}$. We define the \emph{Melnikov function} $M(t_0)$ as
\begin{gather}
M(t_0)=m_-(\eta_0)-m_+(B_0\Phi_-(t_0)\eta_0),\label{eqn:M}
\end{gather}
where $\eta_0\in\Rset^2$ with $|\eta_0|=1$ and
\begin{gather}
m_\pm(\eta)=\frac{1}{2}\eta\cdot\D_y^2 H(x_\pm,0)\eta.\label{eqn:m}
\end{gather}
We have the following theorem (see \cite[Appendix~A]{SY08} for the proof).

\begin{Theorem}\label{thm:2a} For some $\alpha_{\pm}\in(0,\bar{\alpha}_\pm]$,
 let $\gamma_\pm^{\alpha_{\pm}}$ be periodic orbits sufficiently close to $(x_\pm,0)$
 on the same energy surface.
Suppose that $M(t_0)$ has a simple zero.
Then the right branch of the unstable manifold $W_\r^\u \big(\gamma_-^{\alpha_-}\big)$
 and the left branch of the stable manifold $W_\ell^\s \big(\gamma_+^{\alpha_+}\big)$
 intersect transversely on the energy surface, i.e.,
 there exist transverse heteroclinic orbits from~$\gamma_-^{\alpha_-}$ to~$\gamma_+^{\alpha_+}$.
\end{Theorem}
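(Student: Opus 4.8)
The plan is to measure the splitting between the two invariant manifolds on a Poincar\'e section transverse to the unperturbed heteroclinic orbit, adapting to the present heteroclinic setting the Melnikov-type technique developed in \cite{Y00} for homoclinic orbits to saddle-centers.

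First I would record the geometry. For each fixed small $\alpha_\pm$ the orbit $\gamma_\pm^{\alpha_\pm}$ is a periodic orbit carrying one stable and one unstable hyperbolic Floquet direction transverse to its energy level, so by invariant manifold theory its local stable and unstable manifolds exist, are as smooth as $H$, and depend smoothly on $\alpha_\pm\in(0,\bar\alpha_\pm]$. On the common energy surface $\{H=h\}$ -- which exists precisely because the matching condition relating $\alpha_-$ and $\alpha_+$ can be solved once the two Hessians $\D_y^2H(x_\pm,0)$ have the same number of positive eigenvalues, see Proposition~\ref{prop:3a} -- both $W_\r^\u\big(\gamma_-^{\alpha_-}\big)$ and $W_\ell^\s\big(\gamma_+^{\alpha_+}\big)$ are two-dimensional inside the three-dimensional energy surface, hence one-dimensional curves on a two-dimensional section $\Sigma_{t_0}$ chosen transverse to $\big(x^\h(t),0\big)$ at $t=t_0$. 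Since (A1) gives $\D_yH(x,0)=0$, near the heteroclinic orbit $H(x,y)=H(x,0)+\frac{1}{2}y\cdot\D_y^2H(x,0)y+O(|y|^3)$ and the variational equation along $\big(x^\h(t),0\big)$ block-diagonalizes into the tangential part in the $x$-plane and the NVE \eqref{eqn:nveh} for the $y$-directions with fundamental matrix $\Psi(t)$; near the saddle-centers the $y$-dynamics is governed by \eqref{eqn:nvesc} with fundamental matrices $\Phi_\pm(t)$.

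Next I would represent the two curves on $\Sigma_{t_0}$ and extract the leading-order separation. Propagating $W_\r^\u\big(\gamma_-^{\alpha_-}\big)$ forward from a section near $\gamma_-^{\alpha_-}$, its $y$-component at $t=t_0$ is, to leading order in $\alpha_-$, a scalar multiple of $\Psi(t_0)B_-^{-1}$ applied to a unit vector $\eta_0$ encoding the phase along $\gamma_-^{\alpha_-}$, where $B_-$ is the limit in \eqref{eqn:B} obtained by matching the NVE solution to the $\Phi_-$-flow as $t\to-\infty$; symmetrically, $W_\ell^\s\big(\gamma_+^{\alpha_+}\big)$ propagated backward contributes $\Psi(t_0)B_+^{-1}$ applied to a unit vector, and the condition that the two curves meet on $\Sigma_{t_0}$ brings in $B_0=B_+B_-^{-1}$. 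Imposing the energy constraint $H=h$ to eliminate the remaining transverse coordinate turns this two-dimensional matching into a single scalar equation whose leading part, written through the quadratic forms $m_\pm$ of \eqref{eqn:m}, is exactly $M(t_0)=m_-(\eta_0)-m_+\big(B_0\Phi_-(t_0)\eta_0\big)$. Thus the true separation function has the form $\Delta(t_0,\alpha_\pm)=c\,M(t_0)+o(1)$ in the $C^1$ topology as $\alpha_\pm\to0$, uniformly for $t_0$ in compact sets, with a constant $c\neq0$. If $M$ has a simple zero $t_0^\ast$, then $\Delta(\cdot,\alpha_\pm)$ has a nearby simple zero for all sufficiently small $\alpha_\pm$ by the implicit function theorem, and a transverse zero of the separation of the two curves on $\Sigma_{t_0}$ is precisely a transverse intersection of $W_\r^\u\big(\gamma_-^{\alpha_-}\big)$ and $W_\ell^\s\big(\gamma_+^{\alpha_+}\big)$ on the energy surface, i.e., a transverse heteroclinic orbit from $\gamma_-^{\alpha_-}$ to $\gamma_+^{\alpha_+}$.

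I expect the main difficulty to be the singular character of the limit $\alpha_\pm\to0$: the periodic orbits collapse onto the saddle-centers, so although the hyperbolic rates $\lambda_\pm$ stay bounded away from zero the geometry degenerates, and one must establish $C^1$-closeness of the propagated manifolds to their leading-order profiles with estimates that are uniform in $\alpha_\pm$. This requires exponential-dichotomy (or $\lambda$-lemma) estimates both along $\big(x^\h(t),0\big)$ and near $\gamma_\pm^{\alpha_\pm}$, together with a careful proof that the limits $B_\pm$ in \eqref{eqn:B} exist (cf.\ \cite[Lemma~3.1]{Y00}) and that the remainders in the graph representations are genuinely $o(1)$ in $C^1$; this technical core is carried out in \cite[Appendix~A]{SY08}.
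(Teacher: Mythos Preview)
The paper gives no proof of Theorem~\ref{thm:2a} at all: it is stated as a previous result and the reader is referred to \cite[Appendix~A]{SY08} for the argument. Your proposal is a reasonable outline of precisely that Melnikov-type argument and ends by pointing to the same reference for the technical estimates, so your approach and the paper's are the same.
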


\begin{Remark}\label{rmk:2a}Theorem~\ref{thm:2a} is also valid when $x_+=x_-$. In this situation, if $M(t_0)$ has a simple zero, then the stable and unstable manifolds of periodic orbits near the corresponding saddle-center intersect transversely on the energy surface, i.e., there exist transverse homoclinic orbits to the periodic orbits and consequently chaotic dynamics occurs (e.g., \cite{GH83,W03}). See also~\cite{Y00}.
\end{Remark}

Suppose that there also exists a heteroclinic orbit $\hat{x}^\h(t)$
 from $x_+$ to $x_-$ on the $x$-plane
 and that the hypothesis of Theorem~\ref{thm:2a} holds for both of $x^\h(t)$ and $\hat{x}^\h(t)$.
Then the unstable manifolds of~$\gamma_\mp^{\alpha_\mp}$ intersect
 the stable manifolds of~$\gamma_\pm^{\alpha_\pm}$ transversely on the energy surface
 and these manifolds form a heteroclinic cycle.
This implies that there exist transverse homoclinic orbits to $\gamma_\pm$ (see, e.g., \cite[Section~26.1]{W03}),
 so that chaotic dynamics occurs in~\eqref{eqn:sys}.

\subsection{Necessary conditions for integrability}\label{section2.2}
We next briefly describe the result of \cite{YY17} for integrability of \eqref{eqn:sys} in our setting.

\begin{figure}\centering
\includegraphics[scale=0.8]{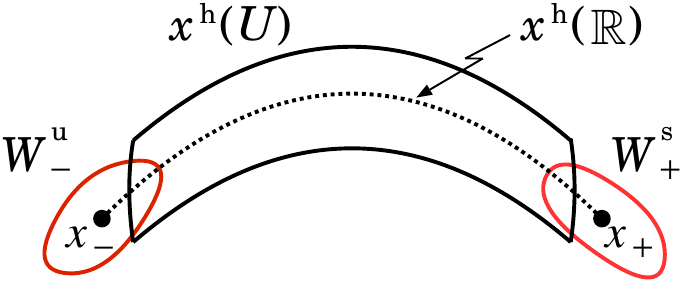}
\caption{Riemann surface $\Gamma=x^\h(U)\cup W_+^\s\cup W_-^\u$.}\label{fig:2b}
\end{figure}

Suppose that (A1)--(A3) hold.
Let $\Gamma_\Rset=\big\{\big(x^\h(t),0\big)\in\Rset^2\times\Rset^2\,|\, t\in\Rset\big\}\cup\{(x_\pm,0)\}$.
Consider the complexification of~\eqref{eqn:sys}
 in a neighborhood of $\Gamma_\Rset$ in $\Cset^4$.
Let $W_\pm^{\s,\u}$ be the one-dimensional local holomorphic stable and unstable manifolds
 of $(x_\pm,0)$ on the $x$-plane.
See \cite{IY08} for the existence of such holomorphic stable and unstable manifolds.
Let $R > 0$ be sufficiently large and let $U$ be a neighborhood of the open interval $(-R,R) \subset \Rset$ in $\Cset$
 such that $x^\h(U)$ contains no equilibrium
 and intersects both $W_+^\s$ and $W_-^\u$.
Here for simplicity we have identified $x^\h(U)\subset\Cset^2$ with $x^\h(U)\times\{0\}$ in $\Cset^4$.
Obviously, $x^\h(U)$ is a one-dimensional complex manifold with boundary.
We take $\Gamma=x^\h(U)\cup W_+^\s\cup W_-^\u$
 and the inclusion map as immersion $i\colon \Gamma\to\Cset^4$.
See Fig.~\ref{fig:2b}.
If $x_+=x_-$ and $x^\h(t)$ is a homoclinic orbit,
 then small modifications are needed in the definitions of $\Gamma$ and $i$.
Let $0_\pm\in\Gamma$ denote points corresponding to the equilibria $x_\pm$.
Taking three charts, $W_\pm^{\s,\u}$ and $x^\h(U)$,
 we rewrite the NVE \eqref{eqn:nveh} along $\Gamma$ as follows
(see \cite[Section~4]{YY17} for the details).

In $x^\h(U)$ we use the complex variable $t\in U$ as the coordinate
 and rewrite the NVE~\eqref{eqn:nveh} as
\begin{gather}
\frac{\d\eta}{\d t}=J\D_y^2H(i(t))\eta,\label{eqn:nve1}
\end{gather}
which has no singularity there.
In $W_+^\s$ and $W_-^\u$
 there exist local coordinates $s_+$ and $s_-$, respectively,
 such that $s_\pm(0_\pm)=0$ and $\d/\d t=h_\pm(s_\pm)\d/\d s_\pm$,
 where $h_\pm(s_\pm)=\mp\lambda_\pm s_\pm+O\big(|s_\pm|^2\big)$ are holomorphic functions.
We use the coordinates $s_\pm$ and rewrite the NVE \eqref{eqn:nveh} as
\begin{gather}
\frac{\d\eta}{\d s_\pm}= \frac{1}{h_\pm(s_\pm)}J\D_y^2H(i(s_\pm))\eta,\label{eqn:nve2}
\end{gather}
which have regular singularities at $s_\pm=0$. Let $M_\pm$ be monodromy matrices of the NVE along~$\Gamma$ around $s_\pm=0$.

Let $\lambda_+'=-\lambda_+$ and $\lambda_-'=\lambda_-$, and let $\mu_\pm=\pm {\rm i}\omega_\pm$ and $\nu_\pm=\mp {\rm i}\omega_\pm$ be eigenvalues of $J\D_y^2H(x_\pm,0)$. Then we have
\[
\frac{\mu_\pm-\nu_\pm}{\lambda_\pm'}
 =\mp\frac{2{\rm i}\omega_\pm}{\lambda_\pm}\not\in\Qset,\qquad
\frac{\mu_\pm+\nu_\pm}{\lambda_\pm'}=0\in\Zset,
\]
which mean that conditions~(A3) and (A4) of \cite{YY17} hold.
Applying Theorem~5.2 of \cite{YY17}, we obtain the following result.

\begin{Theorem}\label{thm:2b}Suppose that assumptions~{\rm(A1)}--{\rm(A3)} hold and the Hamiltonian system \eqref{eqn:sys} is real-meromorphically integrable near $\Gamma_\Rset$. Then the monodromy matrices $M_{\pm}$ are commutative. Moreover, if
\begin{gather}
\frac{\mu_+}{\lambda_+'} - \frac{\mu_-}{\lambda_-'} =-\frac{{\rm i}\omega_+}{\lambda_+}+\frac{{\rm i}\omega_-}{\lambda_-} = 0,\label{eqn:a5}
\end{gather}
then
\begin{gather}
M_+=M_-^{-1}\qquad\mbox{or}\qquad M_+ = M_-.\label{eqn:thm}
\end{gather}
\end{Theorem}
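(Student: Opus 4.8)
The plan is to derive the statement directly from Theorem~5.2 of \cite{YY17}, together with a short argument about the monodromy group of the NVE along $\Gamma$ carried out inside $\SL(2,\Cset)$. First I would record two elementary structural facts. Since the NVE \eqref{eqn:nveh}, hence its rewritings \eqref{eqn:nve1}--\eqref{eqn:nve2}, has the form $\dot\eta=J\D_y^2H\,\eta$ with $\D_y^2H$ symmetric and $J$ the $2\times2$ symplectic matrix, its coefficient matrix is traceless, so every fundamental matrix has constant determinant; consequently the whole differential Galois group $G$ of the NVE along $\Gamma$, and in particular the monodromy matrices $M_\pm$, lie in $\SL(2,\Cset)$. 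Second, by \eqref{eqn:nve1}--\eqref{eqn:nve2} the only singular points of the NVE on $\Gamma$ are the regular singular points $0_\pm$, where the residue matrix is $(1/\lambda_\pm')J\D_y^2H(x_\pm,0)$ with eigenvalues $\mu_\pm/\lambda_\pm'$ and $\nu_\pm/\lambda_\pm'$. Because these two exponents differ by $(\mu_\pm-\nu_\pm)/\lambda_\pm'=\mp2{\rm i}\omega_\pm/\lambda_\pm$, which is not rational and in particular not an integer, each $0_\pm$ is nonresonant, so $M_\pm$ is diagonalizable with eigenvalues $\exp(2\pi{\rm i}\mu_\pm/\lambda_\pm')$ and $\exp(2\pi{\rm i}\nu_\pm/\lambda_\pm')$; since $\mu_\pm+\nu_\pm=0$, these are the reciprocal real numbers $\rho_\pm$ and $\rho_\pm^{-1}$ with $\rho_\pm=\exp(2\pi\omega_\pm/\lambda_\pm)>1$, hence distinct and not roots of unity.

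Next I would invoke Theorem~5.2 of \cite{YY17}. Its hypotheses, conditions (A3) and (A4) of \cite{YY17}, were checked in the paragraph preceding the statement, so real-meromorphic integrability of \eqref{eqn:sys} near $\Gamma_\Rset$ implies that the identity component $G^0$ of $G$ is abelian. The monodromy matrices always lie in $G$; moreover, each $M_\pm$ is semisimple with an eigenvalue that is not a root of unity, so the Zariski closure $\overline{\langle M_\pm\rangle}$ is a connected one-dimensional torus and is therefore contained in $G^0$. Hence $M_+,M_-\in G^0$, and since $G^0$ is abelian, $M_+M_-=M_-M_+$. This is the first assertion.

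For the second assertion, suppose in addition that \eqref{eqn:a5} holds, i.e.\ $\mu_+/\lambda_+'=\mu_-/\lambda_-'$, equivalently $\omega_+/\lambda_+=\omega_-/\lambda_-$, so that $\rho_+=\rho_-=:\rho$ with $\rho\neq\pm1$. Then $M_+$ and $M_-$ are commuting semisimple elements of $\SL(2,\Cset)$ with the common eigenvalue pair $\{\rho,\rho^{-1}\}$; being commuting and semisimple they are simultaneously diagonalizable, so in a common eigenbasis $M_+=\mathrm{diag}(\rho,\rho^{-1})$ while $M_-$ is diagonal with entries drawn from $\{\rho,\rho^{-1}\}$ and product $\det M_-=1$. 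The only possibilities are $M_-=\mathrm{diag}(\rho,\rho^{-1})=M_+$ and $M_-=\mathrm{diag}(\rho^{-1},\rho)=M_+^{-1}$, which is precisely \eqref{eqn:thm}.

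The genuine content of the argument sits inside Theorem~5.2 of \cite{YY17}, namely the Morales--Ramis-type reduction from real-meromorphic integrability near $\Gamma_\Rset$ to commutativity of $G^0$; everything afterwards is elementary. On the present side the only steps that require some care are verifying that the hypotheses of that theorem indeed hold here (already done in the excerpt) and checking that the two local monodromies $M_\pm$ are nonresonant, semisimple, and have the eigenvalues stated above — together with the observation, immediate from \eqref{eqn:nve1}--\eqref{eqn:nve2}, that $0_\pm$ exhaust the singular points of the NVE on $\Gamma$, so that $\langle M_+,M_-\rangle$ is the full monodromy group.
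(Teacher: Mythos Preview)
Your proposal is correct and takes essentially the same route as the paper: the paper does not give a self-contained proof of this theorem but simply cites Theorem~5.2 of \cite{YY17} after verifying its hypotheses, and you do the same, additionally spelling out the elementary $\SL(2,\Cset)$ argument (nonresonance at $0_\pm$ forces $M_\pm$ to be semisimple with real eigenvalues $\rho_\pm^{\pm 1}\neq 1$, hence $M_\pm\in G^0$, so they commute, and under \eqref{eqn:a5} the common spectrum forces $M_+=M_-^{\pm 1}$). These details are precisely the content of Remark~\ref{rmk:2b}(ii), so your write-up is a faithful expansion of what the paper leaves implicit.
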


\begin{Remark}\label{rmk:2b}\quad
\begin{enumerate}\itemsep=0pt
\item[\rm(i)]
Let $U_\Rset$ and $U_\Cset$ be, respectively,
 neighborhoods of $\Gamma_\Rset$ in $\Rset^4$ and in $\Cset^4$.
By real-meromorphic integrablity
 we mean that the real Hamiltonian system \eqref{eqn:sys} has an additional first integral
 which is a restriction of some meromorphic function defined in $U_\Cset$ onto $U_\Rset$.
If the Hamiltonian system \eqref{eqn:sys} is real-meromorphically integrable in $U_\Rset$,
 then its complexification is also meromorphically integrable in $U_\Cset$.
Such real-meromorphically nonintegrable Hamiltonian systems
 were also discussed by using a different approach in \cite{MP04,MP05,Z97}.
\item[\rm(ii)]\looseness=-1
Under the hypothesis of Theorem~\ref{thm:2b},
 the identity component $G^0$ of the differential Galois group for the NVE \eqref{eqn:nveh}
 along $\Gamma$ is commutative if and only if so is $M_{\pm}$.
Moreover, if condition~\eqref{eqn:a5} holds,
 then condition~\eqref{eqn:thm} is necessary and sufficient for $G^0$ to be commutative.
\item[\rm(iii)]
If $x_+=x_-$, then condition~\eqref{eqn:a5} automatically holds,
 so that conclusion \eqref{eqn:thm} is necessary for the real-meromorphic integrability of \eqref{eqn:sys}.
We also note that
 the latter case in \eqref{eqn:thm} was overlooked in the early results of {\rm\cite{MP99,Y03}}.
\end{enumerate}
\end{Remark}

\section{Main results}\label{section3}
Let $\sigma_1^{\pm}$ and $\sigma_2^{\pm}$ be eigenvalues of $\D_y^2 H(x_{\pm},0)$.
We have $\sigma_1^{\pm} \sigma_2^{\pm} = \omega_{\pm}^2$,
 so that $\sigma_1^{\pm}$ and $\sigma_2^{\pm}$ are of the same sign,
 where the upper and lower signs in super- and subscripts are taken simultaneously.
Recall that there are one-parameter families of periodic orbits $\gamma_\pm^{\alpha_\pm}$
 near the saddle-centers $(x_\pm,0)$, as stated in Section~2.1.

\begin{Proposition}\label{prop:3a}If $\sigma_1^\pm$ have the opposite signs,
 then there does not exist a pair $(\alpha_+,\alpha_-)$ with $0<\alpha_\pm\ll 1$
 such that the periodic orbits $\gamma_\pm^{\alpha_\pm}$ around $(x_\pm,0)$
 are on the same energy surface.
\end{Proposition}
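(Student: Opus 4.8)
The plan is to compare the energy values $H|_{\gamma_\pm^{\alpha_\pm}}$ attained along the two Lyapunov families as the amplitudes $\alpha_\pm\to0$: I will show that when $\sigma_1^+$ and $\sigma_1^-$ have opposite signs, these values approach the common saddle-center energy from opposite sides, so that $\gamma_+^{\alpha_+}$ and $\gamma_-^{\alpha_-}$ can never lie on the same energy surface for $0<\alpha_\pm\ll1$.

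First I would record the local structure of $H$ near the saddle-centers. Differentiating the identity in (A1), namely $\D_yH(x,0)=0$, with respect to $x$ shows that the mixed block $\D_x\D_yH(x_\pm,0)$ vanishes, so the Hessian $\D^2H(x_\pm,0)$ is block diagonal with blocks $\D_x^2H(x_\pm,0)$ and $\D_y^2H(x_\pm,0)$. Since (A2) requires $J\D_y^2H(x_\pm,0)$ to have the eigenvalues ${\rm i}\omega_\pm$ and $-{\rm i}\omega_\pm$, the symmetric matrix $\D_y^2H(x_\pm,0)$ is sign-definite, with $\det\D_y^2H(x_\pm,0)=\sigma_1^\pm\sigma_2^\pm=\omega_\pm^2>0$; it is positive definite if $\sigma_1^\pm>0$ and negative definite if $\sigma_1^\pm<0$. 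Moreover $H(x,0)$ is a first integral of the reduced system \eqref{eqn:sysx}, so it takes the same value at the two endpoints $x_\pm$ of the heteroclinic orbit $x^\h(t)$ of (A3) (since $x^\h(t)\to x_\pm$ as $t\to\pm\infty$), that is, $H(x_+,0)=H(x_-,0)=:H_0$.

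Next I would invoke the Lyapunov center theorem more precisely: the family $\gamma_\pm^{\alpha_\pm}$ fills an analytic two-dimensional invariant manifold $\mathcal{W}_\pm$ through $(x_\pm,0)$ that is tangent there to the center eigenspace, which by the block-diagonal structure is exactly the $y$-plane; hence on $\mathcal{W}_\pm$ the displacement satisfies $x-x_\pm=O\big(|y|^2\big)$. Substituting this into the Taylor expansion of $H$ gives, at every point of $\gamma_\pm^{\alpha_\pm}$,
\[
H|_{\gamma_\pm^{\alpha_\pm}}=H_0+\frac{1}{2}\,y\cdot\D_y^2H(x_\pm,0)\,y+O\big(|y|^3\big).
\]
Since $\gamma_\pm^{\alpha_\pm}$ is a nontrivial periodic orbit lying on the graph $\mathcal{W}_\pm$, all of its points have $y\neq0$ with $|y|$ comparable to $\alpha_\pm$, so for $0<\alpha_\pm\ll1$ the definite quadratic term dominates the cubic remainder and $H|_{\gamma_\pm^{\alpha_\pm}}-H_0$ is nonzero with the sign of $\sigma_1^\pm$. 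Equivalently, $(x_\pm,0)$ is a nondegenerate local minimum of $H|_{\mathcal{W}_\pm}$ when $\sigma_1^\pm>0$ and a nondegenerate local maximum when $\sigma_1^\pm<0$, which is precisely what makes the reduced one-degree-of-freedom Hamiltonian system on $\mathcal{W}_\pm$ have a center at that point.

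Finally, if $\sigma_1^+$ and $\sigma_1^-$ have opposite signs, then for all sufficiently small $\alpha_\pm>0$ the numbers $H|_{\gamma_+^{\alpha_+}}-H_0$ and $H|_{\gamma_-^{\alpha_-}}-H_0$ are nonzero of opposite signs, hence $H|_{\gamma_+^{\alpha_+}}\neq H|_{\gamma_-^{\alpha_-}}$ and the two orbits cannot share an energy surface. I expect the only point needing care to be the uniform control of the cubic remainder as the orbit shrinks, i.e., making precise that on $\mathcal{W}_\pm$ the displacement $x-x_\pm$ is quadratic in $y$ and that $\alpha_\pm$ is comparable to $|y|$ along $\gamma_\pm^{\alpha_\pm}$; both are standard consequences of the Lyapunov center theorem (tangency of $\mathcal{W}_\pm$ to the $y$-plane and analytic dependence of the family on $\alpha_\pm$), so no real obstruction arises.
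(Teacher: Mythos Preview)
Your argument is correct and follows essentially the same route as the paper: both proofs use that the Lyapunov periodic orbits lie on a two-dimensional invariant (center) manifold tangent to the $y$-plane, so that $x-x_\pm=O(|y|^2)$ there, and then expand $H$ to see that $H(\gamma_\pm^{\alpha_\pm})-H(x_\pm,0)$ has the sign of the definite quadratic form $\tfrac12 y\cdot\D_y^2H(x_\pm,0)y$. Your write-up simply fills in more of the justification (why the mixed Hessian block vanishes, why $H(x_+,0)=H(x_-,0)$) than the paper's short proof does.
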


\begin{proof}Since the saddle-centers $(x_\pm,0)$ are connected by the heteroclinic orbit $\big(x^\h(t),0\big)$,
 we assume that $H(x_+,0)=H(x_-,0)=0$ without loss of generality.
Using the center manifold theorem \cite{GH83,W03},
 we see that there exist center manifolds of $(x_\pm,0)$
 on which $\gamma_\pm^{\alpha_\pm}=\big(x_\pm^{\alpha_\pm}(t),y_\pm^{\alpha_\pm}(t)\big)$ lie.
Moreover, on the center manifolds, the relations $x-x_\pm=O\big(|y|^2\big)$ hold near $(x_\pm,0)$. Hence,
\[
H\big(\gamma_\pm^{\alpha_\pm}\big) = \frac{1}{2}y_\pm^{\alpha_\pm}(t) \cdot\D_y^2H(x_\pm,0)y_\pm^{\alpha_\pm}(t) +O\big(|y^{\alpha_\pm}(t)|^3\big),
\]
which implies that for $\alpha_\pm>0$ sufficiently small
 there does not exist a pair $(\alpha_+,\alpha_-)$ with $H\big(\gamma_+^{\alpha_+}\big)=H\big(\gamma_-^{\alpha_-}\big)$
 if $\sigma_1^+$ and $\sigma_1^-$ have the opposite signs.
\end{proof}

Henceforth we assume that $\sigma_1^\pm$ have the same sign. From the proof of Proposition~\ref{prop:3a}
 we can take $\alpha_+\in(0,\bar{\alpha}_+)$ for $\alpha_-\in(0,\bar{\alpha}_-)$ sufficiently small
 such that $H\big(\gamma_+^{\alpha_+}\big)=H\big(\gamma_-^{\alpha_-}\big)$, i.e.,
 there exist periodic orbits $\gamma_\pm^{\alpha_\pm}$ near $(x_\pm,0)$ on the same energy surface.
Let $M_\pm$ be the monodromy matrices of the transformed NVE \eqref{eqn:nve1} and \eqref{eqn:nve2} around $s_\pm=0$,
 as defined in Section~\ref{section2.1}. We state our main theorems as follows.

\begin{Theorem}\label{thm:1}Assume that $\sigma_1^\pm$ are of the same sign. Let $\alpha_\pm>0$ be sufficiently small
 and satisfy $H\big(\gamma_+^{\alpha_+}\big)=H\big(\gamma_-^{\alpha_-}\big)$. Then the following hold:
\begin{enumerate}\itemsep=0pt
\item[$(i)$]\looseness=-1
If $\omega_+ = \omega_-$ and the monodromy matrices $M_\pm$ are not commutative,
 then the right branch of the unstable manifold $W_\r^\u \big(\gamma_-^{\alpha_-}\big)$ intersects the left branch of the stable manifold $W_\ell^\s \big(\gamma_+^{\alpha_+}\big)$
 transversely on the energy surface, i.e., transverse heteroclinic orbits from $\gamma_-^{\alpha_-}$ to $\gamma_+^{\alpha_+}$ exist.
\item[$(ii)$] If $\omega_+ \neq \omega_-$, then $W_\r^\u \big(\gamma_-^{\alpha_-}\big)$ and $W_\ell^\s \big(\gamma_+^{\alpha_+}\big)$
 intersect transversely on the energy surface, have quadric tangencies or do not intersect.
In particular, they do not coincide.
\end{enumerate}
\end{Theorem}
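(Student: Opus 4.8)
The plan is to derive both parts from an analysis of the Melnikov function $M(t_0)$ in \eqref{eqn:M}, using Theorem~\ref{thm:2a} together with the leading-order estimate for the splitting distance between $W_\r^\u\big(\gamma_-^{\alpha_-}\big)$ and $W_\ell^\s\big(\gamma_+^{\alpha_+}\big)$ underlying its proof in \cite{SY08}: for $0<\alpha_\pm\ll1$ with $H\big(\gamma_+^{\alpha_+}\big)=H\big(\gamma_-^{\alpha_-}\big)$ this distance is governed by $M(t_0)$, so a simple zero of $M$ yields a transverse intersection, no zero yields disjoint manifolds, a zero that is not simple yields a tangency, and $M\not\equiv0$ forces the manifolds to be distinct. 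The first step is to rewrite $M$. Since $m_-$ is the Hamiltonian, hence a first integral, of the linear system $\dot\eta=J\D_y^2H(x_-,0)\eta$ whose time-$t_0$ map is $\Phi_-(t_0)$, one has $\Phi_-(t_0)^\top\D_y^2H(x_-,0)\Phi_-(t_0)=\D_y^2H(x_-,0)$, whence $m_-(\eta_0)=m_-(\Phi_-(t_0)\eta_0)$ and
\begin{gather*}
M(t_0)=\tfrac12\,\eta_0\cdot\Phi_-(t_0)^\top S\,\Phi_-(t_0)\,\eta_0,\qquad S:=\D_y^2H(x_-,0)-B_0^\top\D_y^2H(x_+,0)\,B_0,
\end{gather*}
with $S$ symmetric. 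Thus $M$ depends on $t_0$ only through $\Phi_-(t_0)$, whose entries are linear combinations of $\cos\omega_-t_0$ and $\sin\omega_-t_0$ since the eigenvalues of $J\D_y^2H(x_-,0)$ are $\pm{\rm i}\omega_-$; hence $M(t_0)=a_0+r\cos(2\omega_-t_0-\phi)$ for suitable $a_0\in\Rset$, $r\ge0$. Such a function has a simple zero exactly when $|a_0|<r$, has a double (and no simple) zero when $|a_0|=r>0$, and has no zero otherwise; moreover $M\equiv0$ if and only if $S=0$, because $\eta\cdot S\eta$ then vanishes on the whole centered ellipse $\big\{\eta\,|\,m_-(\eta)=m_-(\eta_0)\big\}$ swept by $\Phi_-(t_0)\eta_0$, and by homogeneity this forces $S=0$.

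Next I analyse $S$. As \eqref{eqn:sys} is Hamiltonian, $\Phi_\pm(t)$ and $\Psi(t)$ are symplectic, so $\det B_-=\det B_+$ and $B_0=B_+B_-^{-1}\in\SL(2,\Rset)$; using $B_0^\top JB_0=J$ one checks that $S=0$ is equivalent to
\begin{gather*}
J\D_y^2H(x_+,0)=B_0\big(J\D_y^2H(x_-,0)\big)B_0^{-1},
\end{gather*}
i.e.\ the Jacobian matrices of the two NVEs \eqref{eqn:nvesc} are conjugate, and in particular $S=0$ implies $\{\pm{\rm i}\omega_+\}=\{\pm{\rm i}\omega_-\}$. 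This proves~(ii): if $\omega_+\neq\omega_-$ these matrices have different spectra, so $S\neq0$, hence $M\not\equiv0$, and by the previous paragraph $W_\r^\u\big(\gamma_-^{\alpha_-}\big)$ and $W_\ell^\s\big(\gamma_+^{\alpha_+}\big)$ intersect transversely, have a quadratic tangency, or are disjoint, and in any case do not coincide.

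For~(i), assume $\omega_+=\omega_-=\omega$. Since $\sigma_1^\pm$ have the same sign, $\D_y^2H(x_\pm,0)$ are definite of the same sign and $\det\D_y^2H(x_\pm,0)=\omega_\pm^2=\omega^2$; as $\det B_0=1$, the matrix $B_0^\top\D_y^2H(x_+,0)B_0$ is definite of that same sign with determinant $\omega^2$, so $S$ is the difference of two definite matrices of the same sign and equal determinant, hence—by strict monotonicity of the determinant under the Loewner order—$S$ is either $0$ or indefinite. Now suppose $M_\pm$ are not commutative; I claim $S\neq0$. If $S=0$, then $\Phi_+(s)=B_0\Phi_-(s)B_0^{-1}$ for all $s\in\Cset$ by the conjugacy above, and combining this with the identities $M_-=B_-^{-1}\Phi_-(2\pi{\rm i}/\lambda_-)B_-$ and $M_+=B_+^{-1}\Phi_+(-2\pi{\rm i}/\lambda_+)B_+$, obtained from the regular-singular structure of \eqref{eqn:nve2} at $s_\pm=0$ and the connection matrices to the solutions of \eqref{eqn:nve1} along $x^\h(U)$ (cf.\ \cite{YY17,Y00}), gives $M_+=B_-^{-1}\Phi_-(-2\pi{\rm i}/\lambda_+)B_-$, which commutes with $M_-$ because $\Phi_-(2\pi{\rm i}/\lambda_-)$ and $\Phi_-(-2\pi{\rm i}/\lambda_+)$ both lie in the one-parameter group $\{\Phi_-(s)\}_{s\in\Cset}$—a contradiction. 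Hence $S$ is indefinite, so $\eta\cdot S\eta$ takes both signs; as $\Phi_-(t_0)\eta_0$ sweeps a centered ellipse meeting every ray through the origin, $M(t_0)$ takes both signs, so $|a_0|<r$ and $M$ has a simple zero. Theorem~\ref{thm:2a} then gives the asserted transverse heteroclinic orbits.

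The step I expect to be the main obstacle is the precise derivation of the identities $M_-=B_-^{-1}\Phi_-(2\pi{\rm i}/\lambda_-)B_-$ and $M_+=B_+^{-1}\Phi_+(-2\pi{\rm i}/\lambda_+)B_+$ in a common frame, i.e.\ reconciling the complex-monodromy description of the NVE along $\Gamma$ from \cite{YY17} with the real transition matrices $B_\pm$, $\Phi_\pm$, $\Psi$ of the Melnikov construction of \cite{SY08}. One must continue the fundamental matrix of \eqref{eqn:nveh} as $t$ is carried around the singular points, identify the connection matrices between the Frobenius solutions at $s_\pm=0$ and $\Psi(t)$ with the limits \eqref{eqn:B} (up to factors that commute with the relevant monodromy and so drop out of the conjugation), and keep track of the signs coming from $h_\pm(s_\pm)=\mp\lambda_\pm s_\pm+O\big(|s_\pm|^2\big)$. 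The remaining ingredients—the rewriting of $M$, the comparison of determinants, and the sign argument when $S$ is indefinite—are routine.
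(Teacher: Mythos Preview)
Your argument is correct and follows the same route as the paper: both rewrite $M(t_0)$ as a quadratic form governed by the symmetric matrix $S=\D_y^2H(x_-,0)-B_0^\top\D_y^2H(x_+,0)B_0$ (the paper's \eqref{eq:mel}, diagonalised to~$R$), show that $M\equiv0$ forces the conjugacy $J\D_y^2H(x_+,0)=B_0\,J\D_y^2H(x_-,0)\,B_0^{-1}$ and hence $\omega_+=\omega_-$ (giving~(ii)), and then use explicit monodromy formulas---the paper's Lemma~\ref{lem:2}, your $M_\pm=B_\pm^{-1}\Phi_\pm(\mp2\pi{\rm i}/\lambda_\pm)B_\pm$, which differ from the paper's only by a common conjugation by~$B_-$---to see that this conjugacy makes $M_\pm$ commute. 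The one substantive variation is in the auxiliary step that, when $\omega_+=\omega_-$, $M$ is either identically zero or has a simple zero: the paper checks this by the explicit computation of $\tr R$ in Lemma~\ref{lem:1}, whereas your Loewner-order argument (two definite matrices of the same sign and equal determinant cannot differ by a nonzero semidefinite matrix) is a clean conceptual alternative.
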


\begin{Theorem}\label{thm:2}
Assume that $\sigma_1^\pm$ are of the same sign and $\omega_+/\lambda_+ = \omega_- / \lambda_-$.
Let $\alpha_\pm>0$ be sufficiently small
 and satisfy $H\big(\gamma_+^{\alpha_+}\big)=H\big(\gamma_-^{\alpha_-}\big)$.
Then the following hold:
\begin{enumerate}\itemsep=0pt
\item[$(i)$]
If $\omega_+ = \omega_-$ and $M_+\neq M_-^{-1}$, then $W_\r^\u \big(\gamma_-^{\alpha_-}\big)$ intersects $W_\ell^\s \big(\gamma_+^{\alpha_+}\big)$ transversely
 on the energy surface.
\item[$(ii)$]
If $\omega_+ \neq \omega_-$ and $M_+=M_-^{-1}$, then $W_\r^\u \big(\gamma_-^{\alpha_-}\big)$ does not intersect $W_\ell^\s \big(\gamma_+^{\alpha_+}\big)$.
\end{enumerate}
\end{Theorem}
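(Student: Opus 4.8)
The plan is to reduce both assertions to the sign behaviour of the Melnikov function $M(t_0)$ in \eqref{eqn:M}, by rewriting $M(t_0)$ as a fixed quadratic form evaluated along an ellipse and by translating the monodromy condition $M_+=M_-^{-1}$ into a condition on that quadratic form, and then to invoke Theorem~\ref{thm:2a} together with the Melnikov distance estimate underlying Theorem~\ref{thm:1}. First I would fix notation and record the structural facts. Put $A_\pm=\D_y^2H(x_\pm,0)$; since $\sigma_1^\pm\sigma_2^\pm=\omega_\pm^2>0$ and the $\sigma_1^\pm$ have the same sign, the matrices $A_\pm$ are definite of that common sign. The matrix $A_\pm JA_\pm$ is skew-symmetric, so $m_\pm(\eta)=\frac{1}{2}\eta\cdot A_\pm\eta$ is a first integral of $\dot\eta=JA_\pm\eta$; equivalently $\Phi_\pm(t)=\exp(tJA_\pm)$ preserves $m_\pm$. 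Moreover $\Psi(t)$ and $\Phi_\pm(t)$ are symplectic, hence so are $B_\pm$ and $B_0=B_+B_-^{-1}$; in particular $\det B_0=1$ and $B_0^{-1}=-JB_0^{\mathrm{T}}J$.

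Next I would obtain a quadratic-form representation of the Melnikov function. Using that $m_-$ is constant along $\Phi_-$, formula \eqref{eqn:M} becomes
\[
M(t_0)=\frac{1}{2}\,w(t_0)\cdot C\,w(t_0),\qquad w(t_0)=\Phi_-(t_0)\eta_0,\qquad C:=A_--B_0^{\mathrm{T}}A_+B_0 .
\]
As $t_0$ ranges over a period $2\pi/\omega_-$, the point $w(t_0)$ sweeps the ellipse $\{w:w\cdot A_-w=\eta_0\cdot A_-\eta_0\}$ exactly once, so $M(t_0)=c_0+c_1\cos 2\omega_- t_0+c_2\sin 2\omega_- t_0$ for some constants $c_0,c_1,c_2$, and $M(t_0)$ has a simple zero, only double zeros, no zero, or vanishes identically according as the symmetric matrix $C$ is indefinite, semidefinite of rank one, definite, or zero. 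I would also record, for later use, that $\det\big(B_0^{\mathrm{T}}A_+B_0\big)=\det A_+=\omega_+^2$ whereas $\det A_-=\omega_-^2$, together with the elementary fact that two definite $2\times 2$ symmetric matrices of the same sign and equal determinant either coincide or have indefinite difference.

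Then I would connect $M_\pm$ with $C$. From the local form of the transformed NVE \eqref{eqn:nve2} near $s_\pm=0$ (residue matrix $JA_\pm/\lambda_\pm'$, whose eigenvalue difference $\mp2\mathrm{i}\omega_\pm/\lambda_\pm\notin\Qset$ is non-resonant), the monodromy $M_\pm$ is conjugate to $N_\pm:=\exp\big((2\pi\mathrm{i}/\lambda_\pm')JA_\pm\big)$ via the connection matrix $B_\pm$, i.e.\ $M_\pm=B_\pm^{-1}N_\pm B_\pm$; this is the computation underlying Theorem~\ref{thm:2b} and the proof of Theorem~\ref{thm:1}, obtained by continuing $\Psi(t)\approx\Phi_\pm(t)B_\pm$ along $t\mapsto t+2\pi\mathrm{i}/\lambda_\pm'$. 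Under $\omega_+/\lambda_+=\omega_-/\lambda_-$ the eigenvalues of $N_\pm$ are $\mathrm{e}^{\pm2\pi\omega_\pm/\lambda_\pm}$, which are real, positive, distinct and different from $1$; hence $\log N_\pm=(2\pi\mathrm{i}/\lambda_\pm')JA_\pm$ for the principal branch, and a short computation with $B_0=B_+B_-^{-1}$ shows that $M_+=M_-^{-1}$ is equivalent to $B_0^{-1}N_+B_0=N_-^{-1}$, hence (taking principal logarithms) to $B_0^{-1}(JA_+)B_0=(\lambda_+/\lambda_-)JA_-$, hence (since $B_0$ is symplectic) to $B_0^{\mathrm{T}}A_+B_0=(\lambda_+/\lambda_-)A_-$; equivalently $M_+=M_-^{-1}$ holds if and only if $C=\big(1-\lambda_+/\lambda_-\big)A_-$.

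Finally I would conclude. For $(i)$, the hypotheses $\omega_+=\omega_-$ and $\omega_+/\lambda_+=\omega_-/\lambda_-$ force $\lambda_+=\lambda_-$, so by the previous paragraph $M_+=M_-^{-1}$ is equivalent to $C=0$; thus $M_+\neq M_-^{-1}$ gives $C\neq0$, and since $\det\big(B_0^{\mathrm{T}}A_+B_0\big)=\omega_+^2=\omega_-^2=\det A_-$ the elementary fact above forces $C$ to be indefinite, so $M(t_0)$ has a simple zero and Theorem~\ref{thm:2a} yields the transverse intersection. (Here $M_+=M_-$ is moreover impossible, as it would require $B_0^{\mathrm{T}}A_+B_0=-A_-$, contradicting that $A_\pm$ are definite of the same sign; so $M_+\neq M_-^{-1}$ coincides with non-commutativity of $M_\pm$, and $(i)$ is also a direct consequence of Theorem~\ref{thm:1}$(i)$.) For $(ii)$, $\omega_+\neq\omega_-$ forces $\lambda_+\neq\lambda_-$, so $M_+=M_-^{-1}$ gives $C=(1-\lambda_+/\lambda_-)A_-$, a nonzero scalar multiple of the definite matrix $A_-$; hence $M(t_0)=\frac{1}{2}(1-\lambda_+/\lambda_-)\,\eta_0\cdot A_-\eta_0$ is a nonzero constant, and since the distance between $W_\r^\u\big(\gamma_-^{\alpha_-}\big)$ and $W_\ell^\s\big(\gamma_+^{\alpha_+}\big)$ on a Poincar\'e section is, to leading order, proportional to $M(t_0)$ (see \cite{SY08} and the proof of Theorem~\ref{thm:1}$(ii)$), the two manifolds do not intersect for $\alpha_-$ small enough. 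The step I expect to be the main obstacle is the monodromy identity $M_\pm=B_\pm^{-1}N_\pm B_\pm$: one must pin down the branch of $\log N_\pm$, the orientation of the loops around $s_\pm=0$ (exactly what produces the asymmetry $\lambda_+'=-\lambda_+$, $\lambda_-'=\lambda_-$), and the identification of the connection matrices with the $B_\pm$ of \eqref{eqn:B}, all consistently with the conventions of \cite{YY17}; the quadratic-form and determinant arguments are then routine.
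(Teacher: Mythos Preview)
Your argument is correct and follows the same overall strategy as the paper's: write the Melnikov function as a fixed quadratic form in $w(t_0)=\Phi_-(t_0)\eta_0$ (your $C=A_--B_0^{\mathrm T}A_+B_0$ is the paper's $R$ up to conjugation by the orthogonal matrix $P_-$), express the monodromies via $B_0$ and matrix exponentials (the paper's Lemma~\ref{lem:2}), translate $M_+=M_-^{-1}$ into a condition on that form, and invoke Theorem~\ref{thm:2a}. The execution, however, is genuinely different and somewhat cleaner. The paper diagonalizes $A_\pm$ by orthogonal $P_\pm$, computes the exponentials entry-by-entry with $\cosh 2\pi\mu$ and $\sinh 2\pi\mu$, and extracts two scalar relations on the entries $b_{ij}$ of $\tilde B_0$; these match exactly the two squared terms in the explicit formula~\eqref{eqn:det}, giving $\det R=(\omega_+-\omega_-)^2$. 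You stay coordinate-free: since the eigenvalues $e^{\pm2\pi\mu}$ of $N_\pm$ are real, positive and distinct, the principal logarithm is available, and combined with the symplectic identity $B_0^{-1}J=JB_0^{\mathrm T}$ you get directly $B_0^{\mathrm T}A_+B_0=(\lambda_+/\lambda_-)A_-$, i.e.\ $C=(1-\lambda_+/\lambda_-)A_-$. This makes part~(ii) immediate (in fact $M(t_0)$ is a nonzero \emph{constant}, a slightly sharper conclusion than the paper's $\det R>0$), and for part~(i) your ``elementary fact'' (two same-sign definite $2\times2$ matrices of equal determinant either coincide or have indefinite difference, proved by simultaneous diagonalization and AM--GM) replaces the explicit computation in the paper's Lemma~\ref{lem:1}. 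Your parenthetical that $M_+=M_-$ is impossible here, so that $M_+\neq M_-^{-1}$ is equivalent to non-commutativity, is a nice observation not made explicit in the paper. Finally, the step you flag as the main obstacle---the monodromy identity $M_\pm=B_\pm^{-1}N_\pm B_\pm$ with the correct signs $\lambda_\pm'$---is precisely the content of Lemma~\ref{lem:2} (your formulas differ from the paper's by a common conjugation by $B_-$, which leaves the relation $M_+=M_-^{-1}$ unchanged), so your instinct about where the care is needed is accurate.
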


\begin{Remark}\quad
\begin{enumerate}\itemsep=0pt
\item[\rm{(i)}]
The hypothesis of Theorem~{\rm\ref{thm:2}(i)} does not coincide
 with the sufficient condition given in Theorem~\ref{thm:2b}
 for real-meromorphic nonintegrability
 while the hypothesis of Theorem~\ref{thm:1}(i) does.
Similarly, the hypothesis of Theorem~\ref{thm:2}(ii) does not coincide
 with the necessary condition for real-meromorphic integrability.
 \item[\rm{(ii)}]
Assume that $x_-=x_+$ and $x^\h(t)$ is a homoclinic orbit.
Then $\omega_+ = \omega_-$ and $\lambda_+=\lambda_-$.
Hence, we apply Theorem~\ref{thm:2}(i) to recover the result of~\cite{Y03}
 with a necessary correction stated in Remark~\ref{rmk:2b}(iii):
 If $M_+\neq M_-^{-1}$, then the stable and unstable manifolds intersect transversely on the energy surface.
In particular, by Theorem~\ref{thm:2b} and Remark~\ref{rmk:2b}(iii),
 we see that under the sufficient condition for real-meromorphical nonintegrability, the same conclusion holds.
 \end{enumerate}
\end{Remark}

In the rest of this section we prove the main theorems.
We first provide some necessary properties of the Melnikov function $M(t_0)$.
Using \eqref{eqn:m}, we can rewrite \eqref{eqn:M} as
\begin{gather}
M(t_0)=\frac{1}{2}(\Phi_-(t_0)\eta_0)^\T\big( \D_y^2 H(x_-,0)-B_0^\T \D_y^2 H(x_+,0) B_0\big) (\Phi_-(t_0) \eta_0), \label{eq:mel}
\end{gather}
where the superscript {\scriptsize T} represents the transpose operator.
Since the matrix $\D_y^2 H(x_{\pm},0)$ is symmetric,
 there exist a pair of orthogonal matrices $P_{\pm}$ such that
\begin{gather}
P_{\pm}^\T\D_y^2 H(x_{\pm},0) P_{\pm} =
\begin{pmatrix}
\sigma_1^{\pm} & 0\\
0 & \sigma_2^{\pm}
\end{pmatrix}\label{eqn:diag}
\end{gather}
and $\det P_{\pm} = 1$.
Hence, we have
\begin{align}
M(t_0)& =
\frac{1}{2}\big(P_-^\T\Phi_-(t_0)\eta_0\big)^\T\left[
\begin{pmatrix}
\sigma^{-}_1 & 0 \\
0 & \sigma^{-}_2
\end{pmatrix}
 - \tilde{B}_0^\T
\begin{pmatrix}
\sigma^{+}_1 & 0 \\
0 & \sigma^{+}_2
\end{pmatrix}
\tilde{B}_0\right]
\big(P_-^\T \Phi_-(t_0) \eta_0\big)\notag\\
& = \frac{1}{2} \tilde{\eta}(t_0)^\T R \tilde{\eta}(t_0),
\label{eqn:M2}
\end{align}
where $\tilde{B}_0 = P_+^\T B_0 P_-$, $\tilde{\eta} (t_0) = P_-^\T \Phi_- (t_0) \eta_0$ and
\[
R=
\begin{pmatrix}
\sigma^{-}_1 & 0 \\
0 & \sigma^{-}_2
\end{pmatrix}
 - \tilde{B}_0^\T
\begin{pmatrix}
\sigma^{+}_1 & 0 \\
0 & \sigma^{+}_2
\end{pmatrix}
\tilde{B}_0.
\]
On the other hand, there exist a pair of nonsingular matrices $Q_\pm$ such that
\[
Q_\pm^{-1}J\D_y^2H(x_\pm,0)Q_\pm=
\begin{pmatrix}
{\rm i}\omega_\pm & 0\\
0 & -{\rm i}\omega_\pm
\end{pmatrix}.
\]
So we have
\begin{gather}
\Phi_\pm(t)=\exp\big(J\D_y^2H(x_\pm,0)t\big)=Q_\pm
\begin{pmatrix}
{\rm e}^{{\rm i}\omega_\pm t} & 0\\
0 & {\rm e}^{-{\rm i}\omega_\pm t}
\end{pmatrix}
Q_\pm^{-1}.
\label{eqn:Phi}
\end{gather}
Noting that $R$ is symmetric and using~\eqref{eqn:M2} and~\eqref{eqn:Phi}, we immediately obtain the following result.

\begin{Lemma}\label{lem:0}\quad
\begin{enumerate}\itemsep=0pt
\item[$(i)$]
$M(t_0)$ has a simple zero if and only if $\det R < 0$.
\item[$(ii)$]
$M(t_0)$ has no zero if and only if $\det R>0$.
\item[$(iii)$]
$M(t_0)$ is not identically zero but has double zeros
 if and only if $\det R=0$ and $\tr R\neq 0$.
\item[$(iv)$] $M(t_0)$ is identically zero if and only if $\det R=0$ and $\tr R = 0$.
\end{enumerate}
\end{Lemma}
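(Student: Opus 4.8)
The plan is to use identities~\eqref{eqn:M2} and~\eqref{eqn:Phi} to rewrite $M(t_0)$ as a fixed quadratic form restricted to the unit circle, and then to match each of the four stated alternatives against the sign of $\det R$ and the vanishing of $\tr R$.

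First I would make the curve $\tilde\eta(t_0)=P_-^\T\Phi_-(t_0)\eta_0$ explicit. By~\eqref{eqn:Phi} the orbit $t_0\mapsto\Phi_-(t_0)\eta_0$ has the form $a'\cos\omega_- t_0+b'\sin\omega_- t_0$ for constant vectors $a',b'$, with no constant term because $J\D_y^2H(x_-,0)$ has no zero eigenvalue; evaluating $\tilde\eta$ and $\tilde\eta'$ at $t_0=0$ then gives $\tilde\eta(t_0)=a\cos\omega_- t_0+b\sin\omega_- t_0$ with $a=P_-^\T\eta_0$ and $b=\omega_-^{-1}P_-^\T J\D_y^2H(x_-,0)\eta_0$. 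Setting $V=[\,a\mid b\,]$ and $G=V^\T R V$, identity~\eqref{eqn:M2} becomes $M(t_0)=\tfrac12 z^\T G z$ with $z=(\cos\omega_- t_0,\sin\omega_- t_0)^\T$ and $G$ symmetric. The one point that must be checked carefully is that $V$ is nonsingular: we have $V=P_-^\T\big[\,\eta_0\mid\omega_-^{-1}J\D_y^2H(x_-,0)\eta_0\,\big]$, $\det P_-=1$, and $\eta_0$ and $J\D_y^2H(x_-,0)\eta_0$ are linearly independent, since $\eta_0\neq0$ while $J\D_y^2H(x_-,0)$ has the non-real eigenvalues $\pm\mathrm i\omega_-$. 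Hence $\det G=(\det V)^2\det R$ has the same sign as $\det R$, the matrices $G$ and $R$ have the same rank, and $G=0$ if and only if $R=0$, equivalently $\det R=\tr R=0$.

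Finally I would diagonalize the symmetric matrix $G$, with real eigenvalues $g_1,g_2$; in the eigenbasis of $G$ this yields $M(t_0)=\tfrac12\big(g_1\cos^2\psi+g_2\sin^2\psi\big)=\tfrac14(g_1+g_2)+\tfrac14(g_1-g_2)\cos 2\psi$ for an angle $\psi$ that is affine in $t_0$ with slope $\pm\omega_-$, so $M$ ranges over the closed interval with endpoints $g_1/2$ and $g_2/2$. Since the four conditions $\det R<0$; $\det R>0$; $\det R=0$ and $\tr R\neq0$; $\det R=0$ and $\tr R=0$ are mutually exclusive and exhaustive, it suffices to verify one implication in each case. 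If $g_1g_2<0$ (i.e.\ $\det R<0$), then $0$ lies strictly between $g_1/2$ and $g_2/2$, so $M$ changes sign and vanishes somewhere; at a zero, $\cos 2\psi=-(g_1+g_2)/(g_1-g_2)$ with $|(g_1+g_2)/(g_1-g_2)|<1$ (equivalent to $g_1g_2<0$), so $\sin 2\psi\neq0$ and $M'(t_0)$, which is a nonzero multiple of $\sin 2\psi$, is nonzero; the zero is thus simple, giving~$(i)$. If $g_1g_2>0$ (i.e.\ $\det R>0$), the interval avoids $0$ and $M$ has no zero, giving~$(ii)$. Otherwise $g_1g_2=0$, i.e.\ $\det R=0$: if $g_1=g_2=0$ then $G=0$, hence $R=0$, i.e.\ $\tr R=0$, and $M\equiv0$, giving~$(iv)$; if exactly one of $g_1,g_2$ vanishes then $G\neq0$, hence $R\neq0$, i.e.\ $\tr R\neq0$, and $M=\tfrac12 g_i\cos^2\psi$ is not identically zero but vanishes to second order where $\cos\psi=0$, giving~$(iii)$. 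I expect the only slightly delicate steps to be the nonsingularity of $V$ — on which the agreement of the signs of $\det G$ and $\det R$, and hence everything else, rests — and keeping the rank-and-trace bookkeeping straight when $\det R=0$ so as to separate~$(iii)$ from~$(iv)$; there is no deeper obstacle.
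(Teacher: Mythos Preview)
Your argument is correct and is precisely the kind of computation the paper has in mind: the authors give no proof beyond the remark that the lemma follows ``immediately'' from~\eqref{eqn:M2}, \eqref{eqn:Phi} and the symmetry of $R$, and your derivation spells this out carefully, including the one genuinely nontrivial point---the nonsingularity of $V$, which ensures $\det G$ and $\det R$ share the same sign and that $G=0\Leftrightarrow R=0$.
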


This lemma enables us to easily determine by $\det R$ and $\tr R$ whether $M(t_0)$ is not identically zero or not, whether it has a zero or not, and whether its zero is simple or double if it has.

Denote
\[
\tilde{B}_0 =
\begin{pmatrix}
b_{11} & b_{12} \\
b_{21} & b_{22}
\end{pmatrix}.
\]
Since $\Phi_\pm(t)$ and $\Psi(t)$ are fundamental matrices of linear Hamiltonian systems and $\Phi_\pm(0)=\id_2$ (see Section~\ref{section2.1}), we have $\det B_\pm=\det \Psi(0)$ by~\eqref{eqn:B}, so that
\begin{gather}
\det\tilde{B}_0=\det B_0=1.\label{eqn:detB}
\end{gather}
Hence, we compute
\[
\tr R= -\big(\sigma_1^+ b_{11}^2 + \sigma_1^+ b_{12}^2 + \sigma_2^+ b_{21}^2 + \sigma_2^+ b_{22}^2 \big) + \sigma_1^- + \sigma_2^-
\]
and
\begin{gather}
\det R =
(\omega_+ - \omega_-)^2
 -\left(b_{11} \sqrt{\sigma_1^+ \sigma_2^-} - b_{22} \sqrt{\sigma_2^+ \sigma_1^-}\right)^2
 -\left(b_{12} \sqrt{\sigma_1^+ \sigma_1^-} + b_{21} \sqrt{\sigma_2^+ \sigma_2^-}\right)^2\!
.\!\!\!\label{eqn:det}
\end{gather}
Here we have used the relations $\sigma_1^\pm \sigma_2^\pm = \omega_{\pm}^2$.

\begin{Lemma}\label{lem:1}If $\omega_+ = \omega_-$, then $M(t_0)$ is identically zero or it has a simple zero.
\end{Lemma}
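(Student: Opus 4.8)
The plan is to deduce everything from Lemma~\ref{lem:0}. By parts~(i) and~(iv) of that lemma, $M(t_0)$ has a simple zero when $\det R<0$ and is identically zero when $\det R=\tr R=0$; so it suffices to exclude the remaining possibilities of parts~(ii) and~(iii), that is, to prove that $\omega_+=\omega_-$ forces $\det R\le 0$, and moreover $\det R=0\Rightarrow\tr R=0$.

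The inequality is immediate from~\eqref{eqn:det}: setting $\omega_+=\omega_-$ removes the term $(\omega_+-\omega_-)^2$ and exhibits $\det R$ as minus a sum of two squares. (These are squares of real numbers: since $\sigma_1^\pm\sigma_2^\pm=\omega_\pm^2>0$ and all four of $\sigma_1^+,\sigma_2^+,\sigma_1^-,\sigma_2^-$ share the same sign by our standing assumption, each product $\sigma_i^+\sigma_j^-$ under a root is positive.) For the implication, note first that replacing $\D_y^2H(x_\pm,0)$ by its negative sends $R$ to $-R$, leaving $\det R$ unchanged and reversing the sign of $\tr R$, while preserving $\tilde B_0$ and the identities $\sigma_1^\pm\sigma_2^\pm=\omega_\pm^2$; hence we may assume $\sigma_i^\pm>0$. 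Now if $\det R=0$, both squares in~\eqref{eqn:det} vanish, so
\[
b_{11}\sqrt{\sigma_1^+\sigma_2^-}=b_{22}\sqrt{\sigma_2^+\sigma_1^-},\qquad b_{12}\sqrt{\sigma_1^+\sigma_1^-}=-b_{21}\sqrt{\sigma_2^+\sigma_2^-},
\]
and, writing $\omega:=\omega_+=\omega_-$ and using $\sigma_2^\pm=\omega^2/\sigma_1^\pm$, these simplify to the \emph{linear} relations $\sigma_1^+b_{11}=\sigma_1^-b_{22}$ and $\sigma_1^+b_{12}=-\sigma_2^-b_{21}$.

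Finally I would substitute these two relations into the normalization $\det\tilde B_0=1$ (see~\eqref{eqn:detB}), i.e.\ $b_{11}b_{22}-b_{12}b_{21}=1$, which gives $\sigma_1^-b_{22}^2+\sigma_2^-b_{21}^2=\sigma_1^+$, and then into the displayed formula for $\tr R$; using $\sigma_2^+=\sigma_1^-\sigma_2^-/\sigma_1^+$ one computes
\[
\sigma_1^+\big(b_{11}^2+b_{12}^2\big)+\sigma_2^+\big(b_{21}^2+b_{22}^2\big)=\frac{\sigma_1^-+\sigma_2^-}{\sigma_1^+}\big(\sigma_1^-b_{22}^2+\sigma_2^-b_{21}^2\big)=\sigma_1^-+\sigma_2^-,
\]
whence $\tr R=0$. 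Lemma~\ref{lem:0}(i),(iv) then yields the claim. The only real obstacle is the bookkeeping in this last computation, together with the point that $\det R=0$ gives the \emph{linear} consequences above and not merely their squares: it is precisely this sign information that makes the normalization $\det\tilde B_0=+1$ (rather than $(\det\tilde B_0)^2=1$) force the two middle terms of $\tr R$ to cancel.
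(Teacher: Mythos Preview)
Your proof is correct and follows essentially the same route as the paper: both reduce to Lemma~\ref{lem:0}, read off $\det R\le 0$ directly from~\eqref{eqn:det}, and then show that $\det R=0$ together with $\det\tilde{B}_0=1$ forces $\tr R=0$. The only differences are organizational: you first normalize to $\sigma_i^\pm>0$ and rewrite the vanishing conditions as the linear relations $\sigma_1^+b_{11}=\sigma_1^-b_{22}$, $\sigma_1^+b_{12}=-\sigma_2^-b_{21}$ before substituting, whereas the paper manipulates the square-root expressions directly; your explicit sign reduction is a nice touch that makes the subsequent algebra unambiguous.
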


\begin{proof}Assume that $\omega_+ = \omega_-$. Obviously, $\det R \leq 0$ by \eqref{eqn:det}.
If $\det R = 0$, then
\[
b_{11} \sqrt{\sigma_1^+ \sigma_2^-}=b_{22} \sqrt{\sigma_2^+ \sigma_1^-},\qquad
b_{12} \sqrt{\sigma_1^+ \sigma_1^-}=-b_{21} \sqrt{\sigma_2^+ \sigma_2^-},
\]
so that
\begin{align*}
\tr R& =-\sqrt{\sigma_1^+ \sigma_2^+} \left(\sqrt{\frac{\sigma_1^-}{\sigma_2^-}}+\sqrt{\frac{\sigma_2^-}{\sigma_1^-}}\right) (b_{11}b_{22}-b_{12}b_{21})+\sigma_1^-+\sigma_2^-\\
& =-\sqrt{\sigma_1^- \sigma_2^-} \left(\sqrt{\frac{\sigma_1^-}{\sigma_2^-}}+\sqrt{\frac{\sigma_2^-}{\sigma_1^-}}\right) +\sigma_1^-+\sigma_2^-=0.
\end{align*}
Here we have used the relations $\sigma_1^+\sigma_2^+=\sigma_1^-\sigma_2^-$ and $\det\tilde{B}_0=b_{11}b_{22}-b_{12}b_{21}=1$.
Using parts~(i) and~(iv) of Lemma~\ref{lem:0} we obtain the result.
\end{proof}

We also need the following result on the monodromy matrices $M_\pm$ defined in Section~\ref{section2.2}.

\begin{Lemma}\label{lem:2}
The monodromy matrices can be expressed as
\begin{gather}
M_+=B_0^{-1} \exp \left( - \frac{2 \pi {\rm i}}{\lambda_+} J\D_y^2 H(x_+, 0) \right) B_0,\qquad
M_-=\exp \left(\frac{2 \pi {\rm i}}{\lambda_-} J\D_y^2 H(x_-, 0) \right)
\label{eqn:lem2}
\end{gather}
for a common fundamental matrix.
\end{Lemma}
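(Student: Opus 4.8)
The plan is to reduce the statement to the classical local theory of Fuchsian systems at the two regular singular points $s_\pm=0$ of the transformed NVE, together with a matching of asymptotics along the real heteroclinic orbit.

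First I would analyze \eqref{eqn:nve2} near $s_\pm=0$. Since $h_\pm(s_\pm)=\lambda_\pm's_\pm+O(|s_\pm|^2)$ with $\lambda_+'=-\lambda_+$ and $\lambda_-'=\lambda_-$, and $J\D_y^2H(i(s_\pm))\to J\D_y^2H(x_\pm,0)$ as $s_\pm\to0$, the coefficient matrix $\frac{1}{h_\pm(s_\pm)}J\D_y^2H(i(s_\pm))$ has a simple pole at $s_\pm=0$ with residue
\[
\Lambda_\pm:=\frac{1}{\lambda_\pm'}J\D_y^2H(x_\pm,0).
\]
The eigenvalues of $\Lambda_\pm$ are $\mu_\pm/\lambda_\pm'$ and $\nu_\pm/\lambda_\pm'$, whose difference $\mp2{\rm i}\omega_\pm/\lambda_\pm\notin\Qset$ is in particular not a nonzero integer, so $s_\pm=0$ is a nonresonant regular singular point. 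Hence the NVE admits near $s_\pm=0$ a fundamental matrix of the form $\Phi_\pm^{\mathrm{loc}}(s_\pm)=P_\pm(s_\pm)s_\pm^{\Lambda_\pm}$, where $P_\pm$ is holomorphic and invertible near $s_\pm=0$ with $P_\pm(0)=\id_2$ and $s_\pm^{\Lambda_\pm}=\exp(\Lambda_\pm\log s_\pm)$; continuing it once (positively) around $s_\pm=0$ multiplies it on the right by $\exp(2\pi{\rm i}\Lambda_\pm)$, which is therefore its monodromy matrix.

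Next I would take as the common fundamental matrix $\hat\Psi(t):=\Psi(t)B_-^{-1}$, where $\Psi$ is regarded as a fundamental matrix of \eqref{eqn:nve1} on the simply connected chart $x^\h(U)$ (where there is no singularity) restricting to the given $\Psi(t)$ on the real heteroclinic, and $B_-$ is invertible because $\det B_-=\det\Psi(0)\neq0$ (see the paragraph containing \eqref{eqn:detB}). The heart of the argument is to match asymptotics as $t\to\pm\infty$ along the part of the real heteroclinic lying in $W_\pm^{\s,\u}$. From the scalar equation $\dot s_\pm=h_\pm(s_\pm)$ one gets $s_\pm(t)=c_\pm{\rm e}^{\lambda_\pm't}(1+o(1))$ for some $c_\pm\neq0$, and since $\log c_\pm$, $\lambda_\pm't$ and the error are scalars commuting with $\Lambda_\pm$ this yields $s_\pm(t)^{\Lambda_\pm}=c_\pm^{\Lambda_\pm}\exp(tJ\D_y^2H(x_\pm,0))(\id_2+o(1))$. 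Using $P_\pm(s_\pm(t))\to\id_2$ and the boundedness of $\Phi_\pm(\pm t)=\exp(\pm tJ\D_y^2H(x_\pm,0))$ for real $t$ (by \eqref{eqn:Phi}) to absorb the errors, I obtain $\exp(-tJ\D_y^2H(x_\pm,0))\Phi_\pm^{\mathrm{loc}}(t)\to c_\pm^{\Lambda_\pm}$ as $t\to\pm\infty$. On the other hand, by \eqref{eqn:B}, $\exp(-tJ\D_y^2H(x_-,0))\hat\Psi(t)=\Phi_-(-t)\Psi(t)B_-^{-1}\to\id_2$ as $t\to-\infty$, and $\exp(-tJ\D_y^2H(x_+,0))\hat\Psi(t)=\Phi_+(-t)\Psi(t)B_-^{-1}\to B_+B_-^{-1}=B_0$ as $t\to+\infty$.

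Finally, I would note that on the part of the real heteroclinic inside $W_\pm^{\s,\u}$ both $\hat\Psi$ and $\Phi_\pm^{\mathrm{loc}}$ are fundamental matrices of the same NVE, so $\hat\Psi=\Phi_\pm^{\mathrm{loc}}E_\pm$ there for constant matrices $E_\pm$, and comparing the limits above forces $E_-=c_-^{-\Lambda_-}$ and $E_+=c_+^{-\Lambda_+}B_0$. Hence continuing $\hat\Psi$ once around $s_\pm=0$ multiplies it on the right by $M_\pm=E_\pm^{-1}\exp(2\pi{\rm i}\Lambda_\pm)E_\pm$, and since $c_\pm^{\Lambda_\pm}$ commutes with $\exp(2\pi{\rm i}\Lambda_\pm)$ the scaling factors drop out, leaving $M_-=\exp(2\pi{\rm i}\Lambda_-)$ and $M_+=B_0^{-1}\exp(2\pi{\rm i}\Lambda_+)B_0$ for the common fundamental matrix $\hat\Psi$. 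Substituting $\lambda_-'=\lambda_-$ and $\lambda_+'=-\lambda_+$ into $\Lambda_\pm$ then gives exactly \eqref{eqn:lem2}. I expect the asymptotic matching to be the main obstacle: since $\exp(tJ\D_y^2H(x_\pm,0))$ is merely bounded and oscillatory rather than exponentially decaying, one must be careful to use its boundedness and that of its inverse when absorbing error terms, and one must check that the relation $\hat\Psi=\Phi_\pm^{\mathrm{loc}}E_\pm$, valid where the charts overlap, really does persist out to $t\to\pm\infty$ so that the two limits can be compared.
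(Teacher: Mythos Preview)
Your argument is correct and follows essentially the same route as the paper: you use the common fundamental matrix $\hat\Psi(t)=\Psi(t)B_-^{-1}$ (the paper's $\tilde\Psi$), identify the local monodromy around $s_\pm=0$ as $\exp(2\pi{\rm i}\Lambda_\pm)$, and conjugate by the connection matrices determined by the asymptotics \eqref{eqn:B}. The paper's proof is very terse and simply asserts that analytic continuation of the chosen fundamental matrix yields $\exp\big({\mp}\frac{2\pi{\rm i}}{\lambda_\pm}J\D_y^2H(x_\pm,0)\big)$ with base point near $0_-$; you have made explicit the nonresonant Fuchsian local form $P_\pm(s_\pm)s_\pm^{\Lambda_\pm}$ and the asymptotic matching $E_\pm$, and correctly observed that the scalar factors $c_\pm^{\Lambda_\pm}$ commute with the local monodromy and drop out.
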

\begin{proof}Let
\[
\tilde{\Psi} (t) = \Psi (t) B_-^{-1}.
\]
Then $\tilde{\Psi}(t)$ is a fundamental matrix of \eqref{eqn:nveh} such that
\[
\lim_{t\to-\infty}\Phi_- (-t)\tilde{\Psi}(t)=\id_2\qquad\mbox{and}\qquad
\lim_{t\to+\infty}\Phi_+(-t)\tilde{\Psi}(t)=B_0.
\]
For the transformed NVE on $\Gamma$,
 we take a fundamental matrix corresponding to $\tilde{\Psi}(t)$.
Since by~\eqref{eqn:Phi} its analytic continuation yields the monodromy matrices
\[
\exp \left(\mp\frac{2 \pi {\rm i}}{\lambda_\pm} J\D_y^2 H(x_\pm, 0) \right)
\]
along small loops around $0_\pm$, we choose the base point near $0_-$ to obtain~\eqref{eqn:lem2}.
\end{proof}

Now we prove the main theorems.

\begin{proof}[Proof of Theorem \ref{thm:1}] Assume that $M(t_0)$ is identically zero. It follows from~\eqref{eq:mel} that
\[
\D_y^2 H(x_-,0) = B_0^\T \D_y^2 H(x_+,0) B_0.
\]
Since $\det B_0 = 1$, we have $B_0 JB_0^\T=J$, so that
\begin{gather}
J\D_y^2 H(x_-,0) = B_0^{-1} J\D_y^2 H(x_+,0) B_0. \label{eq:sim}
\end{gather}
Hence, $J\D_y^2 H(x_-,0)$ and $J\D_y^2 H(x_+,0)$ have the same eigenvalues,
 i.e., $\omega_+ = \omega_-$.
This implies that if $\omega_+\neq\omega_-$,
 then $M(t_0)$ is not identically zero.
Using Lemma~\ref{lem:0} and Theorem~\ref{thm:2a}, we obtain part~(ii).

On the other hand, using Lemma \ref{lem:2} and \eqref{eq:sim}, we see that if $M(t_0)$ is identically zero, then
\begin{gather*}
M_+ =\exp\left(-\frac{2 \pi {\rm i}}{\lambda_+} J\D_y^2 H(x_-, 0)\right),
\end{gather*}
so that $M_\pm$ are commutative. Hence, if $M_\pm$ are not commutative, then $M(t_0)$ is not identically zero. This yields part (i) by Lemma~\ref{lem:1} and Theorem~\ref{thm:2a}.
\end{proof}

\begin{proof}[Proof of Theorem \ref{thm:2}] Assume that $\omega_+ / \lambda_+ =\omega_- / \lambda_-$.
From Lemma \ref{lem:2} and \eqref{eqn:diag} we have
\begin{gather*}
M_+ = B_0^{-1} P_+ \exp\left( - \frac{2 \pi {\rm i}}{\lambda_+}
\begin{pmatrix}
0 & \sigma_2^+ \\
- \sigma_1^+ & 0
\end{pmatrix}
\right)
P_+^{-1} B_0, \\
M_- = P_-
\exp \left( - \frac{2 \pi {\rm i}}{\lambda_-}
\begin{pmatrix}
0 & \sigma_2^- \\
- \sigma_1^- & 0
\end{pmatrix}
\right)
 P_-^{-1}.
\end{gather*}
Using the relations $\sigma_1^\pm \sigma_2^\pm = \omega_\pm^2$, we easily compute
\[
\exp \left( - \frac{2 \pi {\rm i}}{\lambda_{\pm}}
\begin{pmatrix}
0 & \sigma_2^{\pm} \\
- \sigma_1^{\pm} & 0
\end{pmatrix}
\right)
=
\begin{pmatrix}
\cosh 2 \pi \mu & {\rm i}\sqrt{\sigma_2^{\pm}/\sigma_1^{\pm}}\sinh 2 \pi \mu\\
- {\rm i} \sqrt{\sigma_1^{\pm}/\sigma_2^{\pm}}\sinh 2 \pi \mu & \cosh 2 \pi \mu
\end{pmatrix},
\]
where $\mu = \omega_+ / \lambda_+ = \omega_- / \lambda_-$. So the condition $M_+ = M_-^{-1}$ is equivalent to
\begin{gather*}
\tilde{B}_0
\begin{pmatrix}
\cosh 2 \pi \mu & {\rm i}\sqrt{\sigma_2^{-}/\sigma_1^{-}}\sinh 2 \pi \mu\\
- {\rm i}\sqrt{\sigma_1^{-}/\sigma_2^{-}}\sinh 2 \pi \mu & \cosh 2 \pi \mu
\end{pmatrix}\\
\qquad{}=
\begin{pmatrix}
\cosh 2 \pi \mu & {\rm i}\sqrt{\sigma_2^{+}/\sigma_1^{+}}\sinh 2 \pi \mu\\
- {\rm i}\sqrt{\sigma_1^{+}/\sigma_2^{+}}\sinh 2 \pi \mu & \cosh 2 \pi \mu
\end{pmatrix}
\tilde{B}_0,
\end{gather*}
so that
\[
b_{11} \sqrt{\sigma_1^+ \sigma_2^-} - b_{22} \sqrt{\sigma_2^+ \sigma_1^-} = 0, \qquad
b_{21} \sqrt{\sigma_2^+ \sigma_2^-} + b_{12} \sqrt{\sigma_1^{-} \sigma_1^+} = 0.
\]
Hence, if $M_+ = M_-^{-1}$, then by \eqref{eqn:det}
\[
\det R=(\omega_+-\omega_-)^2.
\]
Thus, we obtain part (ii) by Theorem~\ref{thm:2a} and Lemma~\ref{lem:0}.
Moreover, when $\omega_+=\omega_-$,
 the above observation along with~\eqref{eqn:det} shows that
 $\det R=0$ (if and) only if $M_+ = M_-^{-1}$.
This implies part (i) by Theorem~\ref{thm:2a} and Lemma \ref{lem:1}.
\end{proof}

\section{Example}\label{section4}
To illustrate our theory, we consider the two-degree-of-freedom Hamiltonian system
\begin{alignat}{3}
& \dot{x}_1=x_2,\qquad&& \dot{x}_2=-x_1+x_1^3+\tfrac{1}{2}\beta_1 y_1^2+\beta_2 x_1y_1^2,&\nonumber\\
& \dot{y}_1=y_2,\qquad&& \dot{y}_2=-\omega^2 y_1+\beta_1 x_1y_1+\beta_2 x_1^2y_1-y_1^3 &\label{eqn:ex}
\end{alignat}
with the Hamiltonian
\[
H = \tfrac{1}{2} \big(x_2^2 + y_2^2\big) + \tfrac{1}{2} \big(x_1^2 + \omega^2 y_1^2\big)
 - \tfrac{1}{4}\big(x_1^4+y_1^4\big) - \tfrac{1}{2} \beta_1 x_1 y_1^2 - \tfrac{1}{2}\beta_2 x_1^2 y_1^2,
\]
where $\beta_1, \beta_2, \omega \in \Rset $ are constants such that
\begin{gather}
\omega^2 - \beta_2 > |\beta_1|.\label{eqn:excon}
\end{gather}
We easily see that assumption (A1) holds, i.e., the $x$-plane is invariant under the flow of~\eqref{eqn:ex}.
On the $x$-plane, the Hamiltonian system \eqref{eqn:ex} has two saddles at $x=(\pm 1,0)$ with $\lambda_\pm=\sqrt{2}$, and they are connected by a pair of heteroclinic orbits,
\[
x_\pm^\h(t)=\left(\pm\tanh\left(\frac{t}{\sqrt{2}}\right),
\pm\frac{1}{\sqrt{2}}\sech^2\left(\frac{t}{\sqrt{2}}\right)\right),
\]
satisfying
\[
\lim_{t\to+\infty}x_{\pm}^\h(t)=(\pm 1,0)
\qquad\mbox{and}\qquad
\lim_{t\to-\infty}x_{\pm}^\h(t)=(\mp 1,0).
\]
Thus, assumption (A3) holds for $x_\pm=(\pm 1,0)$ or $(\mp 1,0)$,
 where the upper and lower signs are taken simultaneously.
Moreover, by \eqref{eqn:excon},
 the two equilibria in \eqref{eqn:ex} are saddle-centers,
 so that assumption (A2) holds.
In the following,
 we describe the details of computations for $x_\pm=(\pm 1,0)$ and $x_+^\h(t)$,
 from which the corresponding results
 for $x_\pm=(\mp 1,0)$ and $x_-^\h(t)$ also follow immediately.

Let $x_\pm=(\pm 1,0)$. Then
\[
\omega_\pm = \sqrt{\omega^2 \mp \beta_1 - \beta_2},\qquad
\sigma_1^\pm=1,\qquad
\sigma_2^\pm=\omega^2\mp\beta_1-\beta_2>0.
\]
We see that $\omega_+=\omega_-$ if and only if $\beta_1=0$
 and that $\sigma_1^\pm$ are of the same sign.
The NVE \eqref{eqn:nveh} becomes
\begin{gather}
\dot{\eta}_1 = \eta_2, \qquad \dot{\eta}_2 = -\big(\omega^2-\beta_1 x_{1+}^\h (t) - \beta_2 x_{1+}^\h(t)^2\big)\eta_1,
\label{eq:exnve}
\end{gather}
which reduces to the second-order differential equation
\begin{gather}
\ddot{\eta}_1 + \big(\omega^2-\beta_1 x_{1+}^{\h} (t) - \beta_2 x_{1+}^{\h} (t)^2\big)\eta_1 = 0,
\label{eq:exnve1}
\end{gather}
where $x_{1+}^\h(t)$ represents the $x_1$-component of $x_+^\h(t)$, i.e.,
 $x_{1+}^\h(t)=\tanh\big(t/\sqrt{2}\big)$.
Letting $\rho_{\pm} = - {\rm i} \omega_{\pm}/\sqrt{2}$ and using the transformation
\begin{gather}
\tau = \frac{x_{1+}^{\h} (t) + 1}{2}, \qquad \eta_1 = \tau^{\rho_-} (1 - \tau)^{\rho_+} \xi, \label{eq:transformation}
\end{gather}
we rewrite \eqref{eq:exnve1} as the Gauss hypergeometric equation \cite{IKSY,WW27}
\begin{gather}
\tau (1- \tau) \frac{\d^2 \xi}{\d \tau^2} +(c_3 - (c_1 + c_2 + 1) \tau)\frac{\d \xi }{\d \tau} - c_1 c_2 \xi=0, \label{eq:exnve2}
\end{gather}
where
\[
c_1= \chi_+ + \rho_+ + \rho_-,\qquad
c_2 = \chi_- + \rho_+ + \rho_- ,\qquad
c_3 = 2 \rho_- + 1
\]
with $\chi_{\pm} = \frac{1}{2} \big(1 \pm \sqrt {1 + 8 \beta_2}\big)$.
The equilibria $x_-$ and $x_+$ correspond to $\tau=0$ and $1$, respectively.
Singular points of~\eqref{eq:exnve2} are $\tau=0,1,\infty$ and all of them are regular.

The necessary condition for real-meromorphic integrability given by Theorem~\ref{thm:2b} holds only in a limited case for~\eqref{eqn:ex} as follows.

\begin{Lemma}\label{lem:ex}If the monodromy matrices $M_\pm$ are commutative, then
\begin{gather}
\beta_1 = 0,\qquad
\beta_2 = \tfrac{1}{2} n ( n-1)\qquad\text{for some} \quad n\in\Nset \label{eqn:c}
\end{gather}
and $M_+= M_-^{-1}$.
\end{Lemma}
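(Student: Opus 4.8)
The plan is to carry the whole computation over to the Gauss hypergeometric equation~\eqref{eq:exnve2} --- obtained from the NVE~\eqref{eq:exnve1} by the change of variables~\eqref{eq:transformation} --- and to read off both the reducibility of the monodromy and the relation between $M_+$ and $M_-$ from the classical description of the hypergeometric monodromy group. First I would record the link between $M_\pm$ and the monodromy of~\eqref{eq:exnve2}. Since~\eqref{eq:transformation} multiplies any solution $\xi$ of~\eqref{eq:exnve2} by the factor $\tau^{\rho_-}(1-\tau)^{\rho_+}$, and a local biholomorphic coordinate change from $\tau$ to $s_\pm$ near $\tau=0,1$ leaves the monodromy unchanged, the monodromy matrices of the NVE around $s_-=0$ and $s_+=0$ are
\[
M_-={\rm e}^{2\pi{\rm i}\rho_-}N_0,\qquad M_+={\rm e}^{2\pi{\rm i}\rho_+}N_1,
\]
where $N_0,N_1,N_\infty$ are the monodromy matrices of~\eqref{eq:exnve2} at $\tau=0,1,\infty$ for a common fundamental system, with loops chosen so that $N_0N_1N_\infty=\id_2$. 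The eigenvalues of $M_\pm$ are ${\rm e}^{\pm\sqrt2\,\pi\omega_\pm}$, which are distinct because $\omega_\pm>0$ by~\eqref{eqn:excon}; in particular $M_\pm$, hence $N_0$ and $N_1$, are semisimple, and $\det M_\pm=1$.

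Next, suppose $M_+$ and $M_-$ commute. Two commuting semisimple matrices are simultaneously diagonalizable, so $\langle M_-,M_+\rangle$, and therefore the full monodromy group of~\eqref{eq:exnve2} (as $N_\infty=(N_0N_1)^{-1}$), is simultaneously diagonalizable; in particular it is reducible and $N_\infty$ is diagonalizable. By the classical criterion for reducibility of the hypergeometric monodromy (see, e.g.,~\cite{IKSY}), one of $c_1,c_2,c_3-c_1,c_3-c_2$ must be an integer. Using $\rho_\pm=-{\rm i}\omega_\pm/\sqrt2$ and $\chi_\pm=\tfrac12\big(1\pm\sqrt{1+8\beta_2}\big)$ I would evaluate these four numbers. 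For $c_1=\chi_++\rho_++\rho_-$ and $c_2=\chi_-+\rho_++\rho_-$: if $1+8\beta_2\ge0$ their imaginary part equals $-(\omega_++\omega_-)/\sqrt2\neq0$, while if $1+8\beta_2<0$ their real part equals $\tfrac12$; so $c_1,c_2\notin\Zset$ in all cases. For $c_3-c_1=\chi_-+{\rm i}(\omega_+-\omega_-)/\sqrt2$ and $c_3-c_2=\chi_++{\rm i}(\omega_+-\omega_-)/\sqrt2$ the same dichotomy shows that an integer value forces first $1+8\beta_2\ge0$ (otherwise the real part is $\tfrac12$), then $\omega_+=\omega_-$ (otherwise the imaginary part is nonzero), i.e.\ $\beta_1=0$, and then $\chi_+\in\Zset$ (equivalently $\chi_-\in\Zset$, as $\chi_++\chi_-=1$). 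This forces $\sqrt{1+8\beta_2}$ to be an odd positive integer, $\sqrt{1+8\beta_2}=2n-1$ for some $n\in\Nset$, which is exactly $\beta_2=\tfrac12n(n-1)$. This proves~\eqref{eqn:c}.

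It then remains to upgrade the conclusion to $M_+=M_-^{-1}$. With $\beta_1=0$ and $\beta_2=\tfrac12n(n-1)$ one has $\rho_+=\rho_-=:\rho$, $\chi_+=n$, $\chi_-=1-n$, hence $c_1=n+2\rho$ and $c_2=1-n+2\rho$, so the local exponents at $\tau=\infty$ are congruent modulo $\Zset$ and the eigenvalues of $N_\infty$ are both ${\rm e}^{2\pi{\rm i}c_1}={\rm e}^{4\pi{\rm i}\rho}$ (or both ${\rm e}^{-4\pi{\rm i}\rho}$, depending on the orientation convention). Since $N_\infty$ is diagonalizable with a repeated eigenvalue it is scalar, so $N_0N_1=N_1N_0={\rm e}^{\mp4\pi{\rm i}\rho}\id_2$, whence $M_+M_-={\rm e}^{2\pi{\rm i}\rho}N_1\cdot{\rm e}^{2\pi{\rm i}\rho}N_0={\rm e}^{(4\mp4)\pi{\rm i}\rho}\id_2$; the lower sign is excluded because $\det(M_+M_-)=1$ while $\det\big({\rm e}^{8\pi{\rm i}\rho}\id_2\big)={\rm e}^{16\pi{\rm i}\rho}\neq1$, so $M_+M_-=\id_2$, i.e.\ $M_+=M_-^{-1}$. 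Equivalently --- once $\beta_1=0$ is in hand --- one may quote Theorem~\ref{thm:2b} to get $M_+=M_-^{-1}$ or $M_+=M_-$ and then rule out the latter, since $M_+=M_-$ would make the eigenvalues of $N_\infty=(N_0N_1)^{-1}$ equal to $1$ and ${\rm e}^{8\pi{\rm i}\rho}$ rather than both equal to ${\rm e}^{4\pi{\rm i}\rho}$.

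The step I expect to be most delicate is the bookkeeping in the first paragraph --- keeping track of the scalar prefactors ${\rm e}^{2\pi{\rm i}\rho_\pm}$ and of the normalization $N_0N_1N_\infty=\id_2$ --- together with one point that must not be glossed over: a priori the local monodromy at $\tau=\infty$ could be a nontrivial Jordan block, since the exponent difference there, $c_1-c_2=2n-1$, is a nonzero integer. This possibility is precisely what the hypothesis that $M_+$ and $M_-$ commute excludes: it forces the whole monodromy group, hence $N_\infty$, to be diagonalizable, and that observation is really the engine of the argument.
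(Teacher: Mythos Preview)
Your argument is correct and lands on the same integer conditions as the paper, but the packaging differs. The paper does not invoke the abstract reducibility criterion for the hypergeometric group; instead it writes $M_0$ and $M_1$ explicitly in the Frobenius basis at $\tau=0$ using the connection coefficients $\ell_{ij}$ (Gamma-function quotients, taken from~\cite{IKSY}). Since $M_0$ is already diagonal with distinct eigenvalues, commutativity forces $M_1$ to be diagonal, i.e.\ $\ell_{11}\ell_{21}=\ell_{12}\ell_{22}=0$; the pole structure of $\Gamma$ then gives $\ell_{12},\ell_{21}\neq0$ (because $c_1,c_2\notin\Zset$), hence $\ell_{11}=\ell_{22}=0$, which is exactly your condition that $c_3-c_1$ or $c_3-c_2$ be an integer. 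For the last step the paper simply writes $M_+$ out as a diagonal matrix (once $\ell_{11}=\ell_{22}=0$) and checks $M_+=M_-^{-1}$ entry by entry, whereas you argue that $N_\infty$ has a repeated eigenvalue and is diagonalizable, hence scalar. Your route is a bit more conceptual; the paper's explicit route has the side benefit that the very same coefficients $\ell_{12},\ell_{22}$ reappear later in the matrix $B_0$ and in the Melnikov function, so nothing is wasted.

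One small caution: your closing alternative ``one may quote Theorem~\ref{thm:2b}'' is not quite licit as written, since that theorem's hypothesis is real-meromorphic \emph{integrability}, not merely commutativity of $M_\pm$; the content you need is really the equivalence in Remark~\ref{rmk:2b}(ii). Your primary argument via the scalarity of $N_\infty$ is self-contained and is the clean one to keep.
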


\begin{proof}Let $M_0$ and $M_1$ be the monodromy matrices of~\eqref{eq:exnve2} around $\tau = 0$ and $\tau = 1$, respectively.
Using~\eqref{eq:transformation}, we compute $M_- = e(\rho_-)M_0$ and $M_+ = e(\rho_+)M_1$, where $e(\rho)={\rm e}^{2\pi {\rm i}\rho}$ for $\rho\in\Cset$.
It is a well known fact (see, e.g., \cite[Chapter~2, Theorem~4.7.2]{IKSY}) that the monodromy matrices of~\eqref{eq:exnve2} are given by
\begin{gather*}
M_0 =
\begin{pmatrix}
1 & 0 \\
0 & e(-c_3)
\end{pmatrix},\\
M_1 = \frac{1}{\ell_0}
\begin{pmatrix}
\ell_{11} \ell_{22} - \ell_{12} \ell_{21} e(c_3 - c_1 - c_2) & \ell_{12} \ell_{22} (e(c_3 -c_1 -c_2) -1) \\
\ell_{11} \ell_{21} (1 - e(c_3 - c_1 -c_2)) & \ell_{11} \ell_{22}e(c_3 - c_1 -c_2) - \ell_{12} \ell_{21}
\end{pmatrix},
\end{gather*}
where $\ell_0=\ell_{11} \ell_{22} - \ell_{12}\ell_{21}$,
\begin{alignat*}{3}
&\ell_{11} = \frac{\Gamma (c_3) \Gamma (c_3 -c_1 -c_2)}{\Gamma (c_3 - c_1) \Gamma (c_3 - c_2) },\qquad && \ell_{12} = \frac{\Gamma (2 -c_3) \Gamma (c_3 - c_1 -c_2)}{\Gamma (1-c_1) \Gamma (1 - c_2)},&\\
& \ell_{21} = \frac{\Gamma (c_3) \Gamma (c_1 +c_2-c_3)}{\Gamma (c_1) \Gamma (c_2)}, \qquad && \ell_{22} = \frac{\Gamma (2-c_3) \Gamma (c_1+c_2-c_3)}{\Gamma (c_1 - c_3 + 1) \Gamma ( c_2 - c_3 +1)},&
\end{alignat*}
and $\Gamma(\rho)$ represents the gamma function. Since $c_3 = 2 \rho_- + 1$ and $c_3 - c_1 - c_2 = 1 - \rho_+$ are not integers, we see that if $M_0$ and $M_1$ are commutative, then $M_1$ must be diagonal and consequently $\ell_{12} \ell_{22} = \ell_{11}\ell_{21} = 0$. Moreover, $c_1$ and $c_2$ are not integers, so that $\ell_{12},\ell_{21}\neq 0$, since $1/\Gamma(\rho) = 0$ if and only if $\rho\in \Zset$ and $\rho\le 0$. Hence, if $M_\pm$ are commutative, then $\ell_{11},\ell_{22}=0$.

If $\beta_1 \neq 0$, then $c_3 - c_1$ and $c_3-c_2$ are not integers, so that $\ell_{11},\ell_{22}\neq 0$ and consequently $M_\pm$ are not commutative.
On the other hand, if $\beta_1 = 0$,
 then $c_3 - c_1 = 1 - \chi_+ = c_2 - c_3 + 1$ and $c_3 - c_2 = \chi_+ = c_1 - c_3 + 1$,
 so that $\ell_{11},\ell_{22}=0$ if and only if $\chi_+ \in \Nset$.
Hence, if $M_\pm$ are commutative,
 then $\beta_1 = 0$ and $\chi_+ \in \Nset$, so that the second condition of~\eqref{eqn:c} holds.
Moreover, if condition~\eqref{eqn:c} holds, then $\ell_{11},\ell_{22} = 0$ and $\rho_+ + \rho_- = 0$, so that
\[
M_+ =
\begin{pmatrix}
e(1 - \rho_+) & 0 \\
0 & e(\rho_+)
\end{pmatrix}
=
\begin{pmatrix}
e(\rho_-) & 0 \\
0 & e(1 - \rho_-)
\end{pmatrix}^{-1} = M_-^{-1}.
\]
Thus, we obtain the desired result.
\end{proof}

Obviously, the statement of Lemma~\ref{lem:ex} is also true for $x_\pm=(\mp 1,0)$ and $x_-^\h(t)$.
Let $\gamma_\pm^{\alpha_\pm}$ denote periodic orbits around the saddle-centers at $x=(\pm 1,0)$
 and let $W_\r^\s \big(\gamma_-^{\alpha_-}\big)$ and $W_\ell^\u \big(\gamma_+^{\alpha_+}\big)$
 be the right and left branches of the stable and unstable manifolds
 of $\gamma_-^{\alpha_-}$ and $\gamma_+^{\alpha_+}$, respectively.
Note that $\omega_+/\lambda_+=\omega_-/\lambda_-$ holds if and only if $\beta_1=0$.
Using Theorems~\ref{thm:2b}, \ref{thm:1} and \ref{thm:2} and Lemma~\ref{lem:ex},
 we obtain the following proposition.

\begin{Proposition}\label{prop:ex1}
Suppose that condition~\eqref{eqn:c} does not hold. Then the Hamiltonian system~\eqref{eqn:ex} is real-meromorphically nonintegrable near the heteroclinic orbits $(x,y)=\big(x_\pm^\h(t),0\big)$. Moreover, let $\alpha_\pm>0$ be sufficiently small and satisfy $H\big(\gamma_+^{\alpha_+}\big)=H\big(\gamma_-^{\alpha_-}\big)$. If $\beta_1 = 0$, then $W_\r^\u \big(\gamma_-^{\alpha_-}\big)$ and $W_\ell^\u\big(\gamma_+^{\alpha_+}\big)$, respectively, intersect $W_\ell^\s \big(\gamma_+^{\alpha_+}\big)$ and $W_\r^\s \big(\gamma_-^{\alpha_-}\big)$ transversely on the energy surface, i.e., there exists a heteroclinic cycle. If $\beta_1\neq 0$, then $W_\r^\u \big(\gamma_-^{\alpha_-}\big)$ and $W_\ell^\u\big(\gamma_+^{\alpha_+}\big)$, respectively, intersect $W_\ell^\s \big(\gamma_+^{\alpha_+}\big)$ and $W_\r^\s \big(\gamma_-^{\alpha_-}\big)$ transversely on the energy surface, or these manifolds have quadratic tangencies or do not intersect.
\end{Proposition}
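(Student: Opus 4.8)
The plan is to combine Lemma~\ref{lem:ex} with the two main theorems, treating the cases $\beta_1=0$ and $\beta_1\neq 0$ separately, and in each case applying the result for both heteroclinic orbits $x_+^\h(t)$ and $x_-^\h(t)$. First I would establish the nonintegrability claim: if condition~\eqref{eqn:c} fails, then by the contrapositive of Lemma~\ref{lem:ex} the monodromy matrices $M_\pm$ associated with $x_+^\h(t)$ are not commutative, hence by Theorem~\ref{thm:2b} the system~\eqref{eqn:ex} is not real-meromorphically integrable near $\big(x_+^\h(t),0\big)$; the same argument with the analogue of Lemma~\ref{lem:ex} for $x_-^\h(t)$ (noted in the text to hold verbatim) gives nonintegrability near $\big(x_-^\h(t),0\big)$.

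Next, for the manifold-intersection statements I would split on $\beta_1$. Recall from the example's setup that $\omega_+=\omega_-$ iff $\beta_1=0$ and that $\omega_+/\lambda_+=\omega_-/\lambda_-$ iff $\beta_1=0$ (since $\lambda_\pm=\sqrt2$). In the case $\beta_1=0$: condition~\eqref{eqn:c} failing while $\beta_1=0$ forces $\beta_2\neq\frac12 n(n-1)$ for every $n\in\Nset$, so by Lemma~\ref{lem:ex} the matrices $M_\pm$ are \emph{not} commutative; in particular $M_+\neq M_-^{-1}$. Since $\beta_1=0$ gives both $\omega_+=\omega_-$ and $\omega_+/\lambda_+=\omega_-/\lambda_-$, Theorem~\ref{thm:2}(i) (equivalently Theorem~\ref{thm:1}(i)) applies and yields that $W_\r^\u\big(\gamma_-^{\alpha_-}\big)$ intersects $W_\ell^\s\big(\gamma_+^{\alpha_+}\big)$ transversely on the energy surface. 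Running the identical argument with the roles of $x_+$ and $x_-$ exchanged along the second heteroclinic orbit $x_-^\h(t)$ gives the transverse intersection of $W_\ell^\u\big(\gamma_+^{\alpha_+}\big)$ with $W_\r^\s\big(\gamma_-^{\alpha_-}\big)$. Together these two transverse intersections constitute a heteroclinic cycle between the periodic orbits, as in the last paragraph of Section~\ref{section2.1}.

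In the case $\beta_1\neq 0$: then $\omega_+\neq\omega_-$, so Theorem~\ref{thm:1}(ii) applies directly and tells us that $W_\r^\u\big(\gamma_-^{\alpha_-}\big)$ and $W_\ell^\s\big(\gamma_+^{\alpha_+}\big)$ intersect transversely, have quadratic tangencies, or do not intersect (and in particular do not coincide); again the symmetric statement for $W_\ell^\u\big(\gamma_+^{\alpha_+}\big)$ and $W_\r^\s\big(\gamma_-^{\alpha_-}\big)$ follows by the same reasoning along $x_-^\h(t)$. This exhausts both cases and completes the proof. The only mild subtlety — not a real obstacle — is bookkeeping the branch labels and the two heteroclinic orbits consistently, making sure that "the analogue for $x_-^\h(t)$" is genuinely obtained by the substitution $\beta_1\mapsto\beta_1$ with $x_\pm=(\mp1,0)$ (which only swaps $\omega_+\leftrightarrow\omega_-$ and leaves $\lambda_\pm=\sqrt2$ and the integrability condition~\eqref{eqn:c} unchanged), so that all the cited theorems apply symmetrically.
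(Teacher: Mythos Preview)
Your proposal is correct and follows exactly the approach indicated by the paper, which does not give a detailed proof but simply states that the proposition follows from Theorems~\ref{thm:2b}, \ref{thm:1}, \ref{thm:2} and Lemma~\ref{lem:ex} together with the observation that $\omega_+/\lambda_+=\omega_-/\lambda_-$ iff $\beta_1=0$. Your write-up accurately supplies the missing details, including the contrapositive use of Lemma~\ref{lem:ex}, the fact that noncommutativity of $M_\pm$ implies $M_+\neq M_-^{-1}$, and the symmetric application along $x_-^\h(t)$.
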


\begin{Remark}\label{rmk:ex1}The existence of such a heteroclinic cycle implies that chaotic dynamics occurs in~\eqref{eqn:ex}, as stated at the end of Section~\ref{section2.1}. From Proposition~\ref{prop:ex1} we immediately see that when $\beta_1\neq 0$, the system \eqref{eqn:ex} is real-meromorphically nonintegrable near the heteroclinic orbits although there may not exist a heteroclinic cycle.
\end{Remark}

We next compute the Melnikov function $M(t_0)$ for \eqref{eqn:ex}. Let $x_\pm=(\pm 1,0)$. The NVE \eqref{eqn:nvesc} becomes
\begin{gather*}
\dot{\eta_1} = \eta_2, \qquad \dot{\eta_2} = -\big(\omega^2\mp \beta_1-\beta_2\big)\eta_1,
\end{gather*}
of which the fundamental matrix with $\Phi_\pm(0)=\id_2$ are given by
\begin{gather}
\Phi_{\pm} (t) =
\begin{pmatrix}
\cos \omega_{\pm} t & \sin \omega_{\pm} t/\omega_{\pm}\\
-\omega_{\pm} \sin \omega_{\pm} t & \cos \omega_{\pm} t
\end{pmatrix}.\label{eqn:exPhi}
\end{gather}
Let $F(c_1,c_2,c_3; \tau)$ be the Gauss hypergeometric function,
\[
F(c_1,c_2,c_3; \tau) =\sum_{k=0}^{\infty} \frac{c_1(c_1+1) \cdots (c_1+k-1) c_2(c_2+1) \cdots (c_2+k-1)}{k! c_3 (c_3+1) \cdots (c_3+k-1)} \tau^k.
\]
Then
\[
\xi=\tau^{1-c_3}F ( c_1 - c_3 + 1, c_2 - c_3 + 1, 2 - c_3; \tau)
\]
is a solutions to \eqref{eq:exnve2} as well as $\xi=F(c_1,c_2,c_3; \tau)$
 (see, e.g., \cite[Chapter~2, Section~1.3]{IKSY} or \cite[Section~14.4]{WW27}).
So we obtain the complex valued solution to~\eqref{eq:exnve1},
\begin{gather*}
\eta=\bar{\eta} (t):=
\left(\frac{1+ \tanh(t/\sqrt{2})}{2}\right)^{-\rho_-} \left(\frac{ 1 - \tanh (t/\sqrt{2})}{2}\right)^{\rho_+}\\
\hphantom{\eta=\bar{\eta} (t):= }{} \times F\left(c_1 - c_3 + 1, c_2 - c_3 + 1, 2 - c_3;\frac{1+ \tanh(t/\sqrt{2})}{2}\right),
\end{gather*}
and the fundamental matrix of \eqref{eq:exnve},
\begin{gather}
\Psi (t)=
\begin{pmatrix}
\operatorname{Re} \bar{\eta} (t) & \operatorname{Im}\bar{\eta} (t)/\omega_- \\
\operatorname{Re} \dot{\bar{\eta}} (t) & \operatorname{Im}\dot{\bar{\eta}} (t)/\omega_-
\end{pmatrix}.\label{eqn:exPsi}
\end{gather}

We easily see that
\[
\left(\frac{1+ \tanh(t/\sqrt{2})}{2} \right)^{-\rho_-} \to 1 \qquad \text{and} \qquad \left(\frac{ 1 - \tanh (t/\sqrt{2})}{2} \right)^{\rho_+} \to {\rm e}^{{\rm i} \omega_+ t}
\]
as $t \to \infty$ and
\[
\left(\frac{1+ \tanh(t/\sqrt{2})}{2}\right)^{-\rho_-} \to {\rm e}^{{\rm i} \omega_- t} \qquad \text{and} \qquad \left(\frac{ 1 - \tanh (t/\sqrt{2})}{2}\right)^{\rho_+} \to 1
\]
as $t \to - \infty$. Thus, we have
\begin{gather}
\bar{\eta} (t) \to {\rm e}^{{\rm i} \omega_- t} \qquad \text{as} \quad t \to -\infty\label{cond:1}
\end{gather}
since
\[
\lim_{\tau \to 0} F ( c_1 - c_3 + 1, c_2 - c_3 + 1, 2 - c_3; \tau) = 1.
\]
Using a well-known formula of the hypergeometric function
 (see, e.g., \cite[Chapter~2, equation~(4.7.9)]{IKSY}), we obtain
\begin{gather*}
\tau^{1-c_3} F(c_1 - c_3 + 1, c_2 - c_3 + 1, 2 - c_3; \tau)= \ell_{12}F(c_1, c_2, c_1+c_2 - c_3 + 1; 1 - \tau)\\
 \qquad{}+ \ell_{22}(1-\tau)^{c_3-c_1-c_2} F ( c_3-c_1, c_3-c_2 , c_3- c_1 - c_2 +1;1- \tau),
\end{gather*}
so that
\begin{gather}
\bar{\eta} (t) \to\ell_{12}{\rm e}^{{\rm i} \omega_+ t} + \ell_{22}{\rm e}^{- {\rm i} \omega_+ t} \qquad \text{as} \quad t \to \infty.\label{cond:2}
\end{gather}
Substituting \eqref{eqn:exPhi} and \eqref{eqn:exPsi} into \eqref{eqn:B} and using \eqref{cond:1} and \eqref{cond:2}, we compute
\[
B_+ =
\begin{pmatrix}
\operatorname{Re} \ell_{12} + \operatorname{Re} \ell_{22} & (\operatorname{Im}\ell_{12} + \operatorname{Im}\ell_{22} )/\omega_-\\
- \omega_+ (\operatorname{Im}\ell_{12} + \operatorname{Im}\ell_{22}) & \omega_+(\operatorname{Re} \ell_{12} - \operatorname{Re} \ell_{22})/\omega_-
\end{pmatrix},\qquad B_- = \id_2,
\]
which yields
\begin{gather}
B_0 = B_-^{-1}B_+ =
\begin{pmatrix}
\operatorname{Re} \ell_{12} + \operatorname{Re} \ell_{22} & (\operatorname{Im}\ell_{12} + \operatorname{Im}\ell_{22} )/\omega_-\\
- \omega_+ (\operatorname{Im}\ell_{12} + \operatorname{Im}\ell_{22}) & \omega_+(\operatorname{Re} \ell_{12} - \operatorname{Re} \ell_{22})/\omega_-
\end{pmatrix}.\label{eq:B}
\end{gather}

Equation~\eqref{eqn:m} becomes
\[
m_\pm(\eta)=\tfrac{1}{2}\big(\big(\omega^2\mp\beta_1-\beta_2\big) \eta_1^2 +\eta_2^2\big).
\]
Using \eqref{eqn:M} and \eqref{eq:B}, we obtain the Melnikov function as
\begin{gather*}
M(t_0) = (- \operatorname{Re} \ell_{12} \operatorname{Re} \ell_{22} + \operatorname{Im}\ell_{12} \operatorname{Im}\ell_{22}) \omega_+^2 \cos 2 \omega_- t_0 \\
\hphantom{M(t_0) = }{} + (\operatorname{Re} \ell_{12} \operatorname{Im}\ell_{22} + \operatorname{Im}\ell_{12} \operatorname{Re} \ell_{22}) \omega_+^2 \sin 2 \omega_- t_0 + \tfrac{1}{2} \big( \omega_-^2 - \big(|\ell_{12}|^2 + |\ell_{22}|^2\big) \omega_+^2\big) \\
\hphantom{M(t_0)}{} = \omega_+^2 |\ell_{12}| |\ell_{22}| \cos (2 \omega_- t_0 - \phi_0) + \tfrac{1}{2} \big(\omega_-^2 - (|\ell_{12}|^2 + |\ell_{22}|^2) \omega_+^2\big) ,
\end{gather*}
where
\[
 \tan \phi_0 = \frac{\operatorname{Re} \ell_{12} \operatorname{Im}\ell_{22} + \operatorname{Im}\ell_{12} \operatorname{Re} \ell_{22}}
 {- \operatorname{Re} \ell_{12} \operatorname{Re} \ell_{22} + \operatorname{Im}\ell_{12} \operatorname{Im}\ell_{22}}.
\]
Let
\begin{gather*}
G( \beta_1, \beta_2 , \omega ) := \big(\omega_+^2 |\ell_{12}| |\ell_{22}|\big)^2
 - \tfrac{1}{4}\big(\omega_-^2 - \big(|\ell_{12}|^2 + |\ell_{22}|^2\big) \omega_+^2\big)^2 \notag \\
\hphantom{G( \beta_1, \beta_2 , \omega )}{} =\omega_+^2 \omega_-^2 |\ell_{22}|^2 - \tfrac{1}{4}\omega_-^2(\omega_+ - \omega_-)^2. %\label{eqn:G}
\end{gather*}
Here we have used the relation $|\ell_{12}|^2-|\ell_{22}|^2=\omega_-/\omega_+$ obtained from \eqref{eqn:detB} and \eqref{eq:B}. The Melnikov function $M(t_0)$ has a simple zero (resp.\ no zero) if and only if $G(\beta_1, \beta_2, \omega) > 0$ (resp.\ $G(\beta_1, \beta_2, \omega) < 0$). Obviously, the above arguments are valid for $x_\pm=(\mp 1,0)$ and $x_-^\h(t)$. Applying Theorem~\ref{thm:2a}, we obtain the following proposition.

\begin{Proposition}\label{prop:ex2}Let $\alpha_\pm>0$ be sufficiently small and satisfy $H\big(\gamma_+^{\alpha_+}\big)=H\big(\gamma_-^{\alpha_-}\big)$. If $G(\beta_1, \beta_2, \allowbreak \omega) > 0$, then $W_\r^\u \big(\gamma_-^{\alpha_-}\big)$ and $W_\ell^\u\big(\gamma_+^{\alpha_+}\big)$, respectively, intersect $W_\ell^\s \big(\gamma_+^{\alpha_+}\big)$ and $W_\r^\s \big(\gamma_-^{\alpha_-}\big)$ transversely on the energy surface, i.e., there exists a heteroclinic cycles. If $G(\beta_1, \beta_2, \omega) < 0$, then $W_\r^\u \big(\gamma_-^{\alpha_-}\big)$ and $W_\ell^\u\big(\gamma_+^{\alpha_+}\big)$, respectively, do not intersect $W_\ell^\s \big(\gamma_+^{\alpha_+}\big)$ and $W_\r^\s \big(\gamma_-^{\alpha_-}\big)$.
\end{Proposition}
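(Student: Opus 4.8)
The plan is to read off the zeros of the Melnikov function $M(t_0)$ directly from the closed form obtained just above, and then apply Theorem~\ref{thm:2a} (together with the distance-measuring form of the Melnikov analysis of \cite{SY08} underlying it) to each of the two heteroclinic orbits $x_\pm^\h(t)$ in turn. By the computation preceding the statement, $M(t_0)=A\cos(2\omega_-t_0-\phi_0)+B$ is a pure sinusoid of amplitude $A:=\omega_+^2|\ell_{12}||\ell_{22}|\ge 0$ shifted by the constant $B:=\tfrac12\big(\omega_-^2-(|\ell_{12}|^2+|\ell_{22}|^2)\omega_+^2\big)$. The first step is the elementary dichotomy for a function of this shape: if $A>|B|$ then its range $[B-A,B+A]$ straddles $0$, and at every zero $t_0^\ast$ one has $\cos(2\omega_-t_0^\ast-\phi_0)=-B/A\in(-1,1)$, hence $\sin(2\omega_-t_0^\ast-\phi_0)\ne 0$ and $M'(t_0^\ast)=-2\omega_-A\sin(2\omega_-t_0^\ast-\phi_0)\ne 0$, so $M$ has a simple zero; if $A<|B|$ then $M$ keeps a fixed sign and has no zero at all.

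The second step is the translation into $G$: using the identity $|\ell_{12}|^2-|\ell_{22}|^2=\omega_-/\omega_+$ obtained from \eqref{eqn:detB} and \eqref{eq:B}, one checks that $A^2-B^2=G(\beta_1,\beta_2,\omega)$, whence $M(t_0)$ has a simple zero exactly when $G(\beta_1,\beta_2,\omega)>0$ and no zero exactly when $G(\beta_1,\beta_2,\omega)<0$ — the equivalence already recorded in the text. In the case $G>0$, feeding the simple zero of the Melnikov function attached to $x_+^\h(t)$ into Theorem~\ref{thm:2a} gives that $W_\r^\u\big(\gamma_-^{\alpha_-}\big)$ meets $W_\ell^\s\big(\gamma_+^{\alpha_+}\big)$ transversely on the common energy surface; running the identical construction for the second heteroclinic orbit $x_-^\h(t)$ — for which, as noted, all the formulas above remain valid with the roles of $(\pm 1,0)$ interchanged — produces a simple zero of the corresponding Melnikov function and hence a transverse intersection of $W_\ell^\u\big(\gamma_+^{\alpha_+}\big)$ with $W_\r^\s\big(\gamma_-^{\alpha_-}\big)$. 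The two transverse intersections together constitute a heteroclinic cycle between $\gamma_+^{\alpha_+}$ and $\gamma_-^{\alpha_-}$.

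In the case $G<0$, the Melnikov function is bounded away from zero for each of $x_\pm^\h(t)$, and the non-intersection conclusion is where I expect the only genuine subtlety: Theorem~\ref{thm:2a} as literally stated supplies only the ``simple zero $\Rightarrow$ transverse intersection'' implication, so here I would appeal to the sharper output of the Melnikov analysis of \cite{SY08}, namely that $M(t_0)$ is, up to a nonvanishing factor, the leading-order term of the signed separation between the two manifolds along the unperturbed connection; a Melnikov function of constant sign therefore forces $W_\r^\u\big(\gamma_-^{\alpha_-}\big)$ and $W_\ell^\s\big(\gamma_+^{\alpha_+}\big)$, respectively $W_\ell^\u\big(\gamma_+^{\alpha_+}\big)$ and $W_\r^\s\big(\gamma_-^{\alpha_-}\big)$, to be disjoint in a neighborhood of the heteroclinic orbits. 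This is the same distance estimate already used implicitly in the proofs of Theorems~\ref{thm:1} and \ref{thm:2}. All the remaining ingredients — the hypergeometric connection formula producing $\ell_{12},\ell_{22}$, the evaluation of $B_0$ in \eqref{eq:B}, and the identity $|\ell_{12}|^2-|\ell_{22}|^2=\omega_-/\omega_+$ — have been established above, so the proof reduces to assembling them.
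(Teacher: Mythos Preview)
Your proposal is correct and follows essentially the same route as the paper: the text immediately preceding the proposition already records that $M(t_0)$ has a simple zero (resp.\ no zero) if and only if $G(\beta_1,\beta_2,\omega)>0$ (resp.\ $<0$), notes that the computation carries over verbatim to $x_-^\h(t)$, and then simply invokes Theorem~\ref{thm:2a}. Your write-up is in fact more scrupulous than the paper on one point: for the non-intersection clause you correctly observe that Theorem~\ref{thm:2a} as stated only furnishes the implication ``simple zero $\Rightarrow$ transverse intersection'', and that the conclusion for $G<0$ rests on the underlying distance interpretation of $M(t_0)$ from \cite{SY08} --- the paper leaves this implicit (it does the same in the proof of Theorem~\ref{thm:2}(ii), citing Theorem~\ref{thm:2a} for a non-intersection statement).
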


\begin{Remark}\quad
\begin{enumerate}\itemsep=0pt
\item[(i)] As expected from Proposition~\ref{prop:ex1}, when $\beta_1=0$, we see that $G(\beta_1, \beta_2, \omega) > 0$
 if and only if the second condition of~\eqref{eqn:c} does not hold, i.e.,
\begin{gather}
\beta_2\neq\tfrac{1}{2} n ( n-1)\qquad\text{for any} \quad n\in\Nset.\label{eqn:rmkex2}
\end{gather}
This follows from the fact that $\ell_{22}\neq 0$ if and only if condition~\eqref{eqn:rmkex2} holds (see the proof of Lemma~\ref{lem:ex}).
\item[(ii)] When $\beta_1\neq0$, Proposition~\ref{prop:ex1} means that the Hamiltonian system~\eqref{eqn:ex} is always real-meromorphically nonintegrable as stated in Remark~\ref{rmk:ex1}, but there may not exist heteroclinic cycles for periodic orbits: the function $G(\beta_1,\beta_2,\omega)$ may be negative.
\end{enumerate}
\end{Remark}

\begin{figure}[th!]\centering
\begin{minipage}[t]{0.46\textwidth}\centering
\includegraphics[scale=0.63]{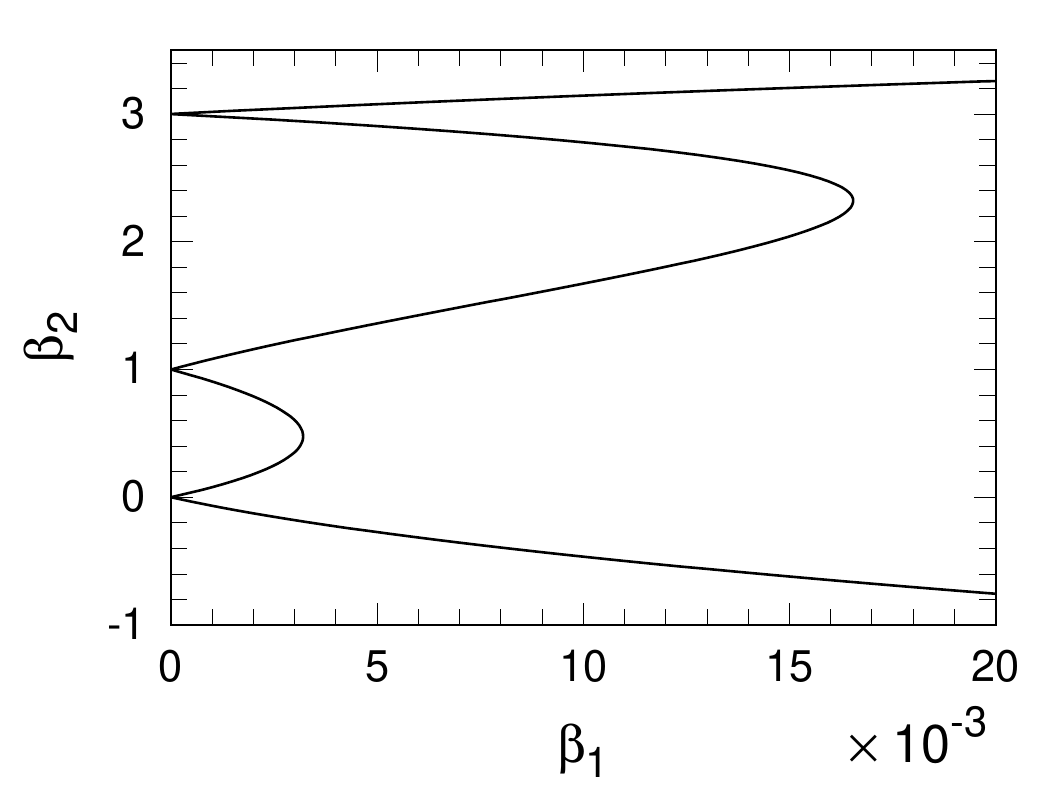}
\caption{Numerical computation of the curve given by $G(\beta_1,\beta_2,\omega) = 0$ with $\omega=2$ in the $(\beta_1,\beta_2)$-plane.}\label{fig:zero}
\end{minipage}
\qquad
\begin{minipage}[t]{0.46\textwidth}\centering
\includegraphics[scale=0.73]{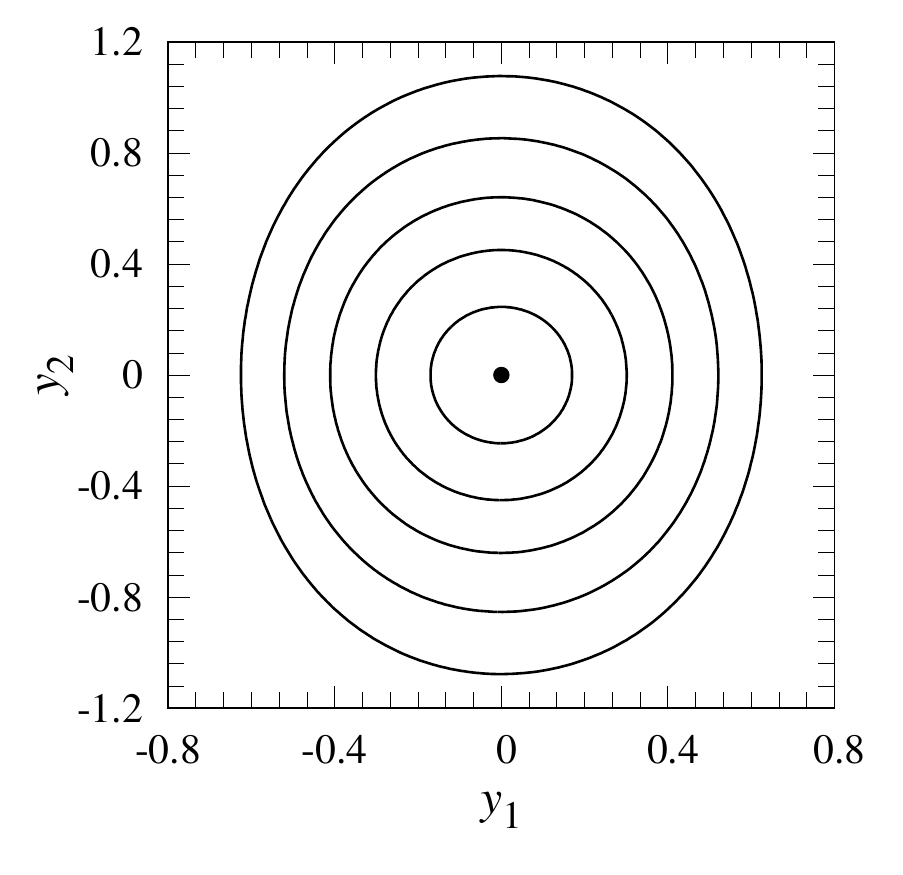}
\caption{Periodic orbits near the saddle-center with $x=(-1,0)$ for $\beta_1 = 5.0\times 10^{-3}$, $\beta_2=2$ and $\omega=2$. Their projections to the $y$-plane are plotted, and their energy values are $H=0.28,0.35,0.45,0.6,0.8$ from the inside.}\label{fig:npo}
\end{minipage}
\end{figure}

\begin{figure}[h!]\centering
\includegraphics[scale=0.72]{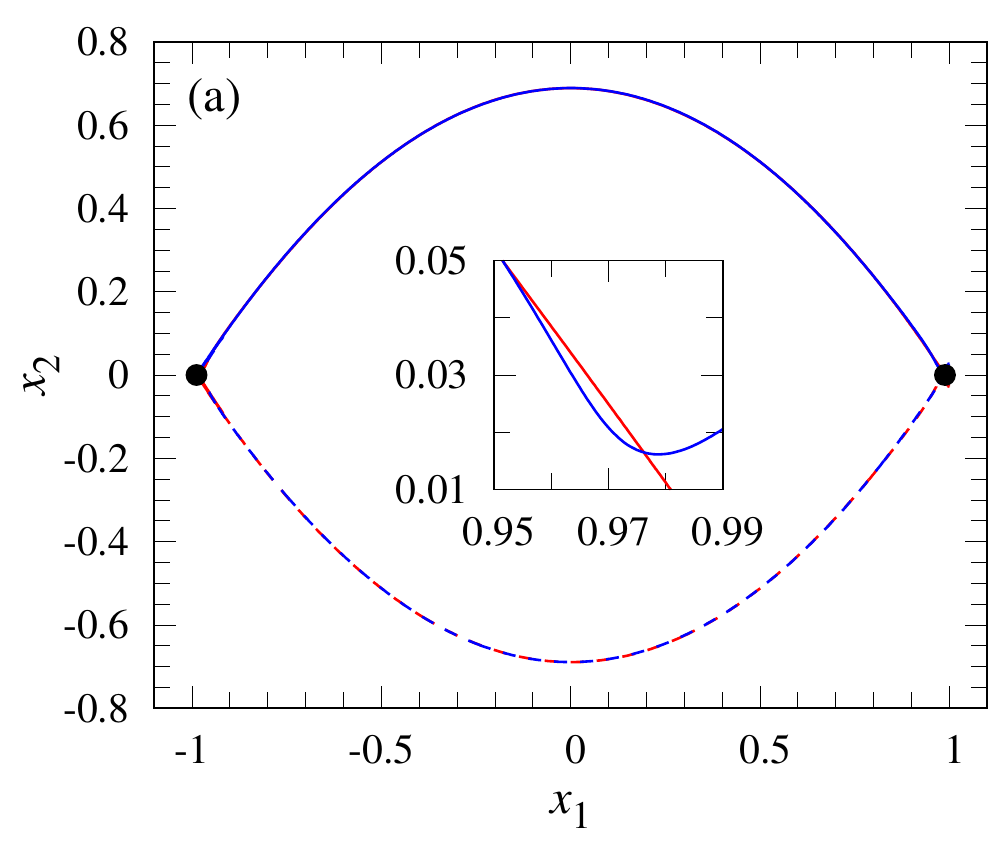}\\[1ex]
\includegraphics[scale=0.72]{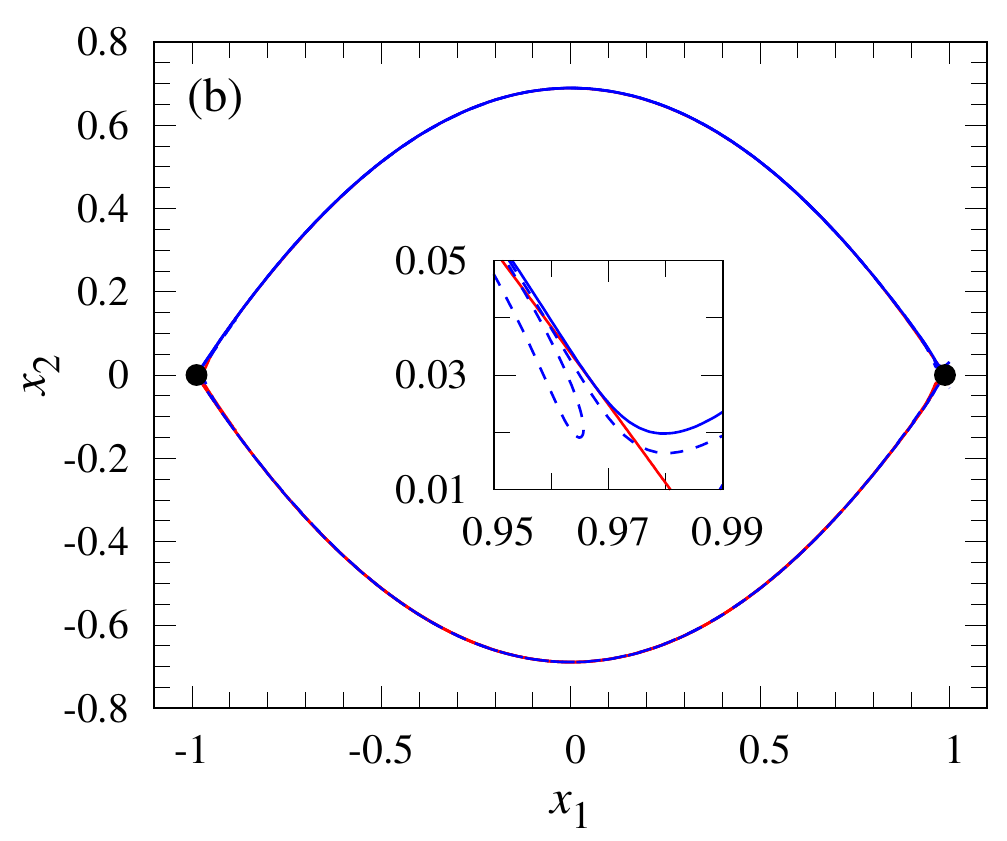}\quad
\includegraphics[scale=0.72]{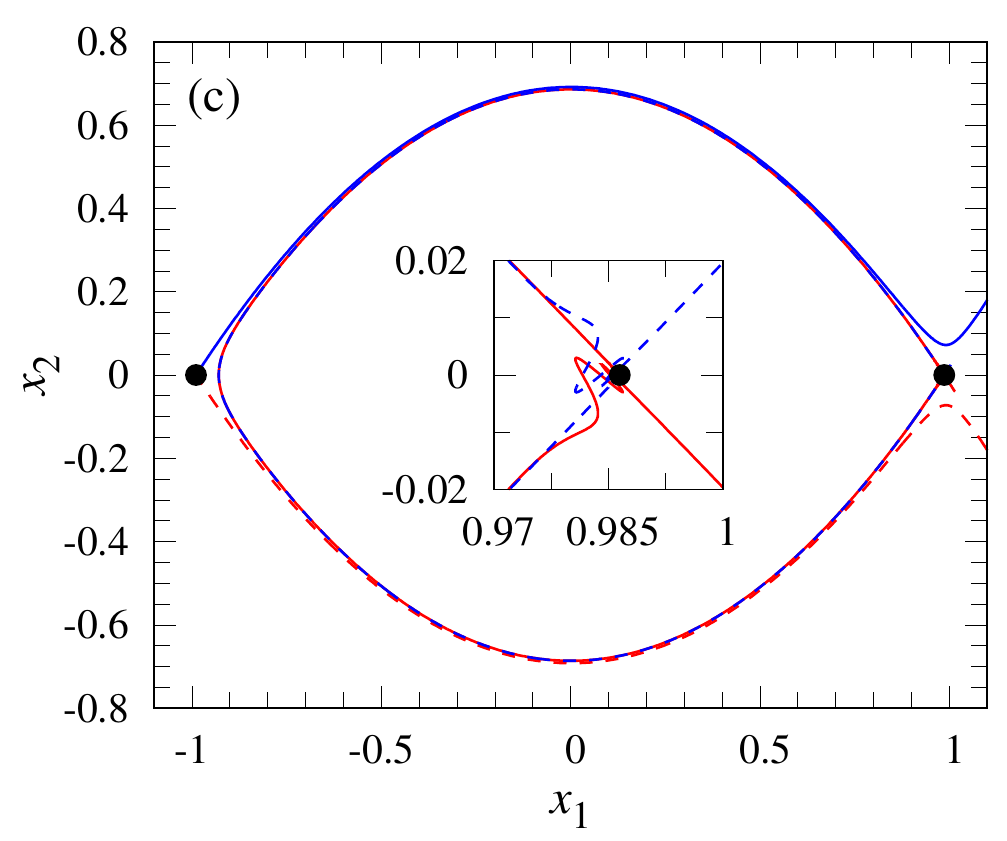}
\caption{Stable and unstable manifolds of periodic orbits near the saddle-centers with $x=(\pm 1,0)$ on the Poincar\'e section $\big\{(x,y) \in \Rset^2 \times \Rset^2 \,|\, y_1 = 0 \big\}$
 for $\beta_2=2$, $\omega=2$ and $H=0.28$: (a)~$\beta_1 = 5\times 10^{-3}$; (b)~$1.32\times 10^{-2}$; (c)~$2\times 10^{-1}$. These manifolds near $\big(x_+^\h(t),0\big)$ and $\big(x_-^\h(t),0\big)$ are plotted as solid and dashed lines, respectively, and blue and red colors are used for the stable and unstable manifolds, respectively.}\label{fig:nsum}
\end{figure}

In Fig.~\ref{fig:zero} we plot the curve given by $G(\beta_1, \beta_2,\omega) = 0$ in the $(\beta_1,\beta_2)$-parameter plane for $\omega=2$. Here we have used the function {\tt fsolve} of {\tt Maple} to numerically solve $G(\beta_1, \beta_2,2) = 0$ for $\beta_2$ varied. By Proposition~\ref{prop:ex2}, heteroclinic cycles on energy surfaces near the saddle-centers exist (resp.\ do not exist) for the parameter values of $\beta_1$, $\beta_2$ in the left (resp.\ right) side of the curve since $G(\beta_1, \beta_2, 2) > 0$ (resp.\ $<0$) there.

To support the above theoretical results, we give numerical computations of the stable and unstable manifolds of periodic orbits near the saddle-centers with $x=(\pm 1,0)$ for the Hamiltonian system~\eqref{eqn:ex}. Our numerical approach was described in \cite[Section~4.3]{SY08} and similar to that of \cite{CL97}. The calculations were carried out by using the numerical computation tool {\tt AUTO}~\cite{DO12}, as in~\cite{CL97,SY08},
although the monodromy matrix (the derivative of the Poincar\'e map) was computed by numerically solving the variational equation around the corresponding periodic orbit directly.

Fig.~\ref{fig:npo} shows numerically computed periodic orbits near the saddle-center with $x=(-1,0)$ for $\beta_1 = 5\times 10^{-3}$, $\beta_2=2$ and $\omega=2$. Similar pictures for periodic orbits were also obtained for the other cases, and periodic orbits far from the saddle-centers could be computed like Fig.~\ref{fig:npo} although the Lyapunov center theorem only guarantees their existence near the saddle-centers.

Fig.~\ref{fig:nsum} shows numerically computed the stable and unstable manifolds, $W^\s\big(\gamma_\pm^{\alpha_\pm}\big)$ and \linebreak $W^\u\big(\gamma_\pm^{\alpha_\pm}\big)$, of periodic orbits $\gamma_\pm^{\alpha_\pm}$ near the saddle-centers on the Poincar\'e section $\big\{(x,y) \in \Rset^2 \times \Rset^2 \,|\, y_1 =0 \big\}$ for $\beta_2=2$, $\omega=2$ and $H=0.28$. In Fig.~\ref{fig:nsum}(a) for $\beta_1=5\times 10^{-3}$, we observe that $W_\r^\u\big(\gamma_-^{\alpha_-}\big)$ and $W_\ell^\u\big(\gamma_+^{\alpha_+}\big)$, respectively, intersect $W_\ell^\s\big(\gamma_+^{\alpha_+}\big)$ and $W_\r^\s\big(\gamma_-^{\alpha_-}\big)$ transversely, and there exists a heteroclinic cycle. In Fig.~\ref{fig:nsum}(b) for $\beta_1=1.32\times 10^{-2}$, $W_\r^\u\big(\gamma_-^{\alpha_-}\big)$ and $W_\ell^\u\big(\gamma_+^{\alpha_+}\big)$, respectively, seem to be quadratically tangent to $W_\ell^\s\big(\gamma_+^{\alpha_+}\big)$ and $W_\r^\s\big(\gamma_-^{\alpha_-}\big)$. In Fig.~\ref{fig:nsum}(c) for $\beta_1=2\times 10^{-1}$, $W_\r^\u\big(\gamma_-^{\alpha_-}\big)$ and $W_\ell^\u\big(\gamma_+^{\alpha_+}\big)$, respectively, do not intersect $W_\ell^\s\big(\gamma_+^{\alpha_+}\big)$ and $W_\r^\s\big(\gamma_-^{\alpha_-}\big)$. We see that for $(\beta_2,\omega)=(2,2)$, $G(\beta_1,\beta_2,\omega)=0$ at $\beta_1\approx 1.5\times 10^{-2}$ in Fig.~\ref{fig:zero}, and predict by Proposition~\ref{prop:ex2} that a heteroclinic cycle exists or not, depending on whether $\beta_1$ is less or greater than the value. Thus, the theoretical prediction fairly agrees with the numerical observation in Fig.~\ref{fig:nsum}. The agreement becomes better when the periodic orbits $\gamma_\pm^{\alpha_\pm}$ are closer to the saddle-centers. In Fig.~\ref{fig:nsum}(c) we also observe that $W^{\s} (\gamma_+^{\alpha_{+}})$ and $W^{\u} \big(\gamma_+^{\alpha_+}\big)$ still intersect transversely. Hence, the Hamiltonian system \eqref{eqn:ex} exhibits chaotic dynamics and it is nonintegrable. This consists with the results of Proposition~\ref{prop:ex1}.

\subsection*{Acknowledgements}
This work was partially supported by Japan Society for the Promotion of Science, Kekenhi Grant Numbers JP17H02859 and JP17J01421. The authors are grateful to Masayuki Asaoka for pointing out the fact stated in Proposition~\ref{prop:3a}. The idea of its proof is also due to him. They also thank the anonymous referees especially for introducing the references \cite{D06,Z88} to them.

\pdfbookmark[1]{References}{ref}
\LastPageEnding


\begin{thebibliography}{99}
\footnotesize\itemsep=0pt

\bibitem{AZ10}
Ayoul M., Zung N.T., Galoisian obstructions to non-{H}amiltonian integrability,
 \href{https://doi.org/10.1016/j.crma.2010.10.024}{\textit{C.~R.~Math. Acad. Sci. Paris}} \textbf{348} (2010), 1323--1326,
 \href{https://arxiv.org/abs/0901.4586}{arXiv:0901.4586}.

\bibitem{B98}
Bogoyavlenskij O.I., Extended integrability and bi-{H}amiltonian systems,
 \href{https://doi.org/10.1007/s002200050412}{\textit{Comm. Math. Phys.}} \textbf{196} (1998), 19--51.

\bibitem{CL97}
Champneys A.R., Lord G.J., Computation of homoclinic solutions to periodic
 orbits in a reduced water-wave problem, \href{https://doi.org/10.1016/S0167-2789(96)00206-0}{\textit{Phys.~D}} \textbf{102} (1997),
 101--124.

\bibitem{CH11}
Crespo T., Hajto Z., Algebraic groups and differential {G}alois theory,
 \textit{Graduate Studies in Mathematics}, Vol.~122, \href{https://doi.org/10.1090/gsm/122}{Amer. Math. Soc.},
 Providence, RI, 2011.

\bibitem{DO12}
Doedel E.J., Oldeman B.E., AUTO-07P: Continuation and bifurcation software for
 ordinary differential equations, 2012, available at
 \url{http://indy.cs.concordia.ca/auto}.

\bibitem{D06}
Dovbysh S.A., The splitting of separatrices, the branching of solutions and
 non-integrability in the problem of the motion of a spherical pendulum with
 an oscillating suspension point, \href{https://doi.org/10.1016/j.jappmathmech.2006.03.011}{\textit{J.~Appl. Math. Mech.}} \textbf{70}
 (2006), 42--55.

\bibitem{G94}
Grotta~Ragazzo C., Nonintegrability of some {H}amiltonian systems, scattering
 and analytic continuation, \href{https://doi.org/10.1007/BF02112316}{\textit{Comm. Math. Phys.}} \textbf{166} (1994),
 255--277.

\bibitem{GH83}
Guckenheimer J., Holmes P., Nonlinear oscillations, dynamical systems, and
 bifurcations of vector fields, \textit{Applied Mathematical Sciences},
 Vol.~42, \href{https://doi.org/10.1007/978-1-4612-1140-2}{Springer-Verlag}, New York, 1983.

\bibitem{IY08}
Ilyashenko Y., Yakovenko S., Lectures on analytic differential equations,
 \textit{Graduate Studies in Mathematics}, Vol.~86, Amer. Math. Soc.,
 Providence, RI, 2008.

\bibitem{IKSY}
Iwasaki K., Kimura H., Shimomura S., Yoshida M., From {G}auss to
 {P}ainlev\'{e}. A modern theory of special functions, \textit{Aspects of
 Mathematics}, Vol.~E16, \href{https://doi.org/10.1007/978-3-322-90163-7}{Friedr. Vieweg \& Sohn}, Braunschweig, 1991.

\bibitem{K96}
Kozlov V.V., Symmetries, topology and resonances in {H}amiltonian mechanics,
 \textit{Ergebnisse der Mathematik und ihrer Grenzgebiete~(3)}, Vol.~31,
 \href{https://doi.org/10.1007/978-3-642-78393-7}{Springer-Verlag}, Berlin, 1996.

\bibitem{L91}
Lerman L.M., Hamiltonian systems with loops of a separatrix of a saddle-center,
 \textit{Selecta Math. Soviet.} \textbf{10} (1991), 297--306.

\bibitem{MP04}
Maciejewski A.J., Przybylska M., Nonintegrability of the {S}uslov problem,
 \href{https://doi.org/10.1063/1.1644324}{\textit{J.~Math. Phys.}} \textbf{45} (2004), 1065--1078.

\bibitem{MP05}
Maciejewski A.J., Przybylska M., Differential {G}alois approach to the
 non-integrability of the heavy top problem, \href{https://doi.org/10.5802/afst.1090}{\textit{Ann. Fac. Sci. Toulouse
 Math.}} \textbf{14} (2005), 123--160, \href{https://arxiv.org/abs/math.DS/0404367}{arXiv:math.DS/0404367}.

\bibitem{M63}
Melnikov V.K., On the stability of a center for time-periodic perturbations,
 \textit{Trans. Moscow Math. Soc.} \textbf{12} (1963), 3--52.

\bibitem{MO17}
Meyer K.R., Offin D.C., Introduction to {H}amiltonian dynamical systems and the
 {$N$}-body problem, 3rd ed., \textit{Applied Mathematical Sciences}, Vol.~90,
 \href{https://doi.org/10.1007/978-3-319-53691-0}{Springer}, Cham, 2017.

\bibitem{M99}
Morales-Ruiz J.J., Differential {G}alois theory and non-integrability of
 {H}amiltonian systems, \textit{Progress in Mathematics}, Vol.~179,
 \href{https://doi.org/10.1007/978-3-0348-8718-2}{Birkh\"auser Verlag}, Basel, 1999.

\bibitem{MP99}
Morales-Ruiz J.J., Peris J.M., On a {G}aloisian approach to the splitting of
 separatrices, \href{https://doi.org/10.5802/afst.925}{\textit{Ann. Fac. Sci. Toulouse Math.}} \textbf{8} (1999),
 125--141.

\bibitem{MR01}
Morales-Ruiz J.J., Ramis J.P., Galoisian obstructions to integrability of
 {H}amiltonian systems, \href{https://doi.org/10.4310/MAA.2001.v8.n1.a3}{\textit{Methods Appl. Anal.}} \textbf{8} (2001),
 33--95.

\bibitem{M73}
Moser J., Stable and random motions in dynamical systems, \textit{Annals of
 Mathematics Studies}, Vol.~77, Princeton University Press, Princeton, N.J.,
 1973.

\bibitem{P92}
Poincar\'{e} H., New methods of celestial mechanics, {V}ols. {I}--{III}, AIP
 Press, New York, 1982.

\bibitem{SY08}
Sakajo T., Yagasaki K., Chaotic motion of the {$N$}-vortex problem on a sphere.
 {I}.~{S}addle-centers in two-degree-of-freedom {H}amiltonians,
 \href{https://doi.org/10.1007/s00332-008-9019-9}{\textit{J.~Nonlinear Sci.}} \textbf{18} (2008), 485--525.

\bibitem{S99}
Sim\'o C. (Editor), Hamiltonian systems with three or more degrees of freedom,
 \textit{Nato Science Series~C}, Vol.~533, \href{https://doi.org/10.1007/978-94-011-4673-9}{Kluwer}, Dordrech, 1999.

\bibitem{VS03}
van~der Put M., Singer M.F., Galois theory of linear differential equations,
 \textit{Grundlehren der Mathematischen Wissenschaften}, Vol.~328,
 \href{https://doi.org/10.1007/978-3-642-55750-7}{Springer-Verlag}, Berlin, 2003.

\bibitem{WW27}
Whittaker E.T., Watson G.N., A course of modern analysis, \textit{Cambridge
 Mathematical Library}, \href{https://doi.org/10.1017/CBO9780511608759}{Cambridge University Press}, Cambridge, 1996.

\bibitem{W03}
Wiggins S., Introduction to applied nonlinear dynamical systems and chaos, 2nd ed.,
 \textit{Texts in Applied Mathematics}, Vol.~2, \href{https://doi.org/10.1007/b97481}{Springer-Verlag}, New
 York, 2003.

\bibitem{Y00}
Yagasaki K., Horseshoes in two-degree-of-freedom {H}amiltonian systems with
 saddle-centers, \href{https://doi.org/10.1007/s002050000094}{\textit{Arch. Ration. Mech. Anal.}} \textbf{154} (2000),
 275--296.

\bibitem{Y02}
Yagasaki K., Homoclinic and heteroclinic behavior in an
 infinite-degree-of-freedom {H}amiltonian system: chaotic free vibrations of
 an undamped, buckled beam, \href{https://doi.org/10.1016/S0375-9601(01)00324-3}{\textit{Phys. Lett.~A}} \textbf{285} (2001),
 55--62.

\bibitem{Y03}
Yagasaki K., Galoisian obstructions to integrability and {M}elnikov criteria
 for chaos in two-degree-of-freedom {H}amiltonian systems with saddle centres,
 \href{https://doi.org/10.1088/0951-7715/16/6/307}{\textit{Nonlinearity}} \textbf{16} (2003), 2003--2012.

\bibitem{YY17}
Yagasaki K., Yamanaka S., Nonintegrability of dynamical systems with homo- and
 heteroclinic orbits, \href{https://doi.org/10.1016/j.jde.2017.03.006}{\textit{J.~Differential Equations}} \textbf{263} (2017),
 1009--1027.

\bibitem{Z88}
Ziglin S.L., Splitting of the separatrices and the nonexistence of first
 integrals in systems of differential equations of {H}amiltonian type with two
 degrees of freedom, \href{https://doi.org/10.1070/IM1988v031n02ABEH001082}{\textit{Math. USSR Izvestiya}} \textbf{31} (1988),
 407--421.

\bibitem{Z97}
Ziglin S.L., The absence of an additional real-analytic first integral in some
 problems of dynamics, \href{https://doi.org/10.1007/BF02465998}{\textit{Funct. Anal. Appl.}} \textbf{31} (1997), 3--9.

\end{thebibliography}
\end{document}